\documentclass[a4paper,12pt]{amsart}

\pdfoutput=1

\usepackage{amsmath,amssymb,amsthm,url}
\usepackage[utf8]{inputenc}
\usepackage[T1]{fontenc}
\usepackage{libertine}
\usepackage[libertine,cmintegrals,cmbraces]{newtxmath}
\usepackage{bbold}
\usepackage[shortlabels]{enumitem}
\usepackage{mathtools}
\usepackage{adjustbox}
\usepackage{microtype}
\usepackage{xcolor}
\urlstyle{sf}
\usepackage{tikz}
\usepackage{tikz-cd}
\usepackage{nicefrac}
\usepackage{dsfont}
\usepackage{todonotes}
\usepackage{a4wide}
\usepackage{anyfontsize}

\AtBeginDocument{%
   \def\MR#1{}
} 
\usepackage{euscript}
\usepackage[all]{xy}

\usepackage[pagebackref]{hyperref}
  
\hypersetup{%
  bookmarksnumbered=true,
  colorlinks=true,%
  linkcolor=blue,%
  citecolor=blue,%
  filecolor=blue,%
  menucolor=blue,%
  urlcolor=blue,%
  pdfnewwindow=true,%
  pdfstartview=FitBH}
\usepackage[capitalise]{cleveref}


\setlength{\parskip}{1ex}
\setlength{\parindent}{0pt}

\usepackage{xcolor}
\definecolor{seagreen}{RGB}{46,139,87}
\definecolor{maroon}{RGB}{128,0,0}
\definecolor{darkviolet}{RGB}{148,0,211}
\definecolor{twelve}{RGB}{100,100,170}
\definecolor{thirteen}{RGB}{100,150,50}
\definecolor{fourteen}{RGB}{200,0,0}
\definecolor{fifteen}{RGB}{0,200,0}
\definecolor{sixteen}{RGB}{0,0,200}
\definecolor{seventeen}{RGB}{200,0,200}
\definecolor{eighteen}{RGB}{0,200,200}


\newcommand{\bb}[1]{\mathbb{#1}}

\newcommand{\es}[1]{\EuScript{#1}}
\renewcommand{\sf}[1]{{\mathsf{#1}}}


\newcommand{\fib}{\mathsf{fib}}



\newcommand{\s}{{\sf{Sp}}}

\newcommand{\homog}[1]{\mathsf{Homog}^{#1}}
\newcommand{\exc}[1]{\mathsf{Exc}_{\ast}^{ #1}}

\newcommand{\Aut}{\mathsf{Aut}}
\newcommand{\Fun}{\sf{Fun}}
\DeclareMathOperator{\Hom}{\mathsf{Hom}}

\DeclareMathOperator{\Map}{\mathsf{Map}}


  \newcommand{\adjunction}[4]{
\xymatrix{
#1:#2 \ar@<.5ex>[r] &
\ar@<.5ex>[l] #3:#4
}}

\newtheorem{thm}{Theorem}[section]
\newtheorem{prop}[thm]{Proposition}

\newtheorem{lem}[thm]{Lemma}
\newtheorem{cor}[thm]{Corollary}

\newtheorem{xxthm}{Theorem}

\newtheorem{xxcor}[xxthm]{Corollary}

\theoremstyle{definition}
\newtheorem{definition}[thm]{Definition}
\newtheorem{ex}[thm]{Example}

\newtheorem{rem}[thm]{Remark}


\begin{document}
\title{An algebraic model for rational excisive functors}

\author{David Barnes}
\address[Barnes]{Queen's University Belfast}
\email{d.barnes@qub.ac.uk}

\author{Magdalena K\k{e}dziorek}
\address[K\k{e}dziorek]{Radboud University Nijmegen}
\email{m.kedziorek@math.ru.nl}

\author{Niall Taggart}
\address[Taggart]{Queen's University Belfast}
\email{n.taggart@qub.ac.uk}

\begin{abstract}
We provide a new proof of the rational splitting of excisive endofunctors of spectra as a product of their homogeneous layers independent of rational Tate vanishing. We utilise the analogy between endofunctors of spectra and equivariant stable homotopy theory and as a consequence, we obtain an algebraic model for rational excisive functors.
\end{abstract}

\maketitle

\setcounter{tocdepth}{1}
{\hypersetup{linkcolor=black} \tableofcontents}

\section{Introduction}

Goodwillie calculus~\cite{GoodcalcI, GoodcalcII, GoodCalcIII} is a tool for studying functors between categories which carry a notion of homotopy. It approximates a homotopy-preserving functor $F$ by a tower 
\[\begin{tikzcd}
	&& F \\
	\cdots & {P_dF} & \cdots & {P_1F} & {P_0F}
	\arrow[from=2-4, to=2-5]
	\arrow[from=2-2, to=2-3]
	\arrow[from=2-3, to=2-4]
	\arrow[bend right=30, from=1-3, to=2-2]
	\arrow[from=2-1, to=2-2]
	\arrow[bend left=30, from=1-3, to=2-4]
	\arrow[bend left=20, from=1-3, to=2-5]
\end{tikzcd}\]
of functors, acting as a categorification of the Taylor series of a function from differential calculus. Through this analogy, polynomial functions of degree $d$ are replaced by $d$-excisive functors and the functor $P_dF$ appearing in the Goodwillie tower of $F$ is the best approximation to $F$ by a $d$-excisive functor. The difference between consecutive excisive approximations 
\[
D_d F = \fib(P_dF \longrightarrow P_{d-1}F),
\]
is called a homogeneous functor of degree $d$, analogous to a monomial of degree $d$. 

The basic aim of Goodwillie calculus is to understand the input functor $F$
by analysing its excisive and homogenous approximations. Hence, one
studies the structure of the categories of excisive and homogeneous functors
in general. 
In the case of $d$-homogeneous functors from spectra to spectra, Goodwillie's classification of homogeneous functors says that there is an equivalence of $(\infty-)$categories between the category of $d$-homogeneous functors and spectra with an action of $\Sigma_d$ (the group of permutations of $d$ elements)
\[
\homog{d}(\s^\omega,\s) \simeq \s^{B\Sigma_d},
\]
realised by sending a functor $F$ to its $d$-th derivative $\partial_d F \in \s^{B\Sigma_d}$. 
This is analogous to a monomial of degree $d$ being determined by its $d$-th derivative. 

The classification of categories of excisive functors is more complicated. 
For functors between the categories of (pointed) spaces and spectra, Arone and Ching~\cite{ACClassification} provided  models in terms of coalgebras over a comonad.
Glasman~\cite{Glasman} gives a simpler classification of the category $\exc{d}(\s^\omega,\s)$ of $d$-excisive endofunctors of spectra in terms of (spectral) Mackey functors on the category $\sf{Epi}_{\leq d}$ of epimorphisms of finite sets of cardinality at most $d$. 
This is analogous to the description of equivariant spectra for a finite group $G$ as (spectral) Mackey functors for the orbit category $\sf{Orb}_{G}$ of Guillou and May \cite{GM24} and Barwick \cite{barwick17}.  

Glasman's result instigated an analogy between equivariant homotopy theory and Goodwillie calculus which was utilised by Arone, Barthel, Heard and Sanders~\cite{AroneBarthelHeardSanders} to provide a tensor triangular classification of excisive functors through a computation of the Balmer spectrum of the category of (compact) $d$-excisive functors. 
A key ingredient in this work is the Goodwillie-Burnside ring $A(d)$: the abstract Burnside ring in the sense of Yoshida~\cite{YoshidaBurnside} associated to the category $\sf{Epi}_{\leq d}$. This is a generalisation of the Burnside ring from equivariant algebra and representation theory. Indeed, inputting the orbit category of a finite group $G$ into Yoshida's machine produces the familiar Burnside ring. 

In this paper, we analyse the idempotents of the rationalised Goodwillie-Burnside ring to give a new proof of the splitting of the category of rational $d$-excisive functors in terms of rational homogeneous functors.
In this context, a functor is rational if it takes values in $\bb{Q}$-local spectra, those spectra whose
homotopy groups are rational vector spaces. 

\begin{xxthm}\label{thm: main thm}
There is an equivalence
\[
\exc{d}(\s^\omega,\s)_\bb{Q} \simeq \prod_{1 \leq i \leq d} \homog{i}(\s^\omega,\s)_\bb{Q},
\]
of $\infty$-categories between the $\infty$-category of rational $d$-excisive functors and the product of the $\infty$-categories of rational homogeneous functors. 
\end{xxthm}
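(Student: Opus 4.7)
The plan is to adapt the classical strategy that splits rational $G$-spectra via Burnside ring idempotents, using Glasman's Mackey-functor model as the bridge. I would proceed in four steps.

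First, I would apply Glasman's equivalence between $\exc{d}(\s,\s)$ and spectral Mackey functors on $\sf{Epi}_{\leq d}$, and then rationalise, yielding an identification of $\exc{d}(\s,\s)_\bb{Q}$ with the stable $\infty$-category of rational spectral Mackey functors on $\sf{Epi}_{\leq d}$. Under this equivalence, the endomorphism ring of the unit is identified with the rationalised Goodwillie-Burnside ring $A(d)_\bb{Q}$ in Yoshida's sense, and idempotents in this ring will drive the decomposition.

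Second, I would analyse the ring structure of $A(d)_\bb{Q}$. By mimicking the mark homomorphism / Möbius inversion argument that identifies the ordinary rational Burnside ring $A(G)_\bb{Q}$ with a product of copies of $\bb{Q}$ indexed by conjugacy classes of subgroups, I would show that $A(d)_\bb{Q}$ is semisimple, with a family $\{e_1,\ldots,e_d\}$ of orthogonal primitive idempotents summing to $1$ and indexed by the isomorphism classes of objects of $\sf{Epi}_{\leq d}$, i.e.\ by cardinalities $1 \leq i \leq d$. Concretely, one constructs enough ring homomorphisms $A(d)_\bb{Q} \to \bb{Q}$ to separate idempotents, and inverts the resulting ghost map; this is the central piece of bookkeeping and the step most likely to be delicate, since one must carry out the Gluck/Dress type argument in Yoshida's abstract setting rather than for an orbit category.

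Third, since orthogonal idempotents in the endomorphism ring of the unit of a stable presentable $\infty$-category always yield a smashing product decomposition, the $e_i$ furnish an equivalence
\[
\exc{d}(\s,\s)_\bb{Q} \simeq \prod_{i=1}^{d} e_i \cdot \exc{d}(\s,\s)_\bb{Q}.
\]
Finally, I would identify each factor with $\homog{i}(\s,\s)_\bb{Q}$. The idempotent $e_i$ should correspond, under Glasman's equivalence, to the condition ``supported on the stratum $\sf{Epi}_{=i} \subset \sf{Epi}_{\leq d}$,'' so that $e_i \cdot \exc{d}(\s,\s)_\bb{Q}$ is equivalent to rational spectra with a $\Sigma_i$-action, which by Goodwillie's classification is precisely $\homog{i}(\s,\s)_\bb{Q}$. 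Equivalently, one verifies that $e_i$ acts as the identity on the $i$-th homogeneous layer $D_iF$ of any $F$ and as zero on all other layers $D_jF$ with $j \neq i$, making the localisation at $e_i$ the functor $F \mapsto D_iF$. This last compatibility, between Yoshida's combinatorially defined idempotents and the cross-effect filtration underlying Goodwillie's derivative, is where the full force of the Mackey-functor dictionary is used and where I expect the main technical work of the proof to reside.
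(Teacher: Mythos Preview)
Your strategy is correct in spirit and lands on the same key idea as the paper---split via the primitive idempotents of the rational Goodwillie--Burnside ring $A_\bb{Q}(d)$, then identify each local piece with a homogeneous layer---but the execution differs in two notable ways. First, the paper never invokes Glasman's Mackey-functor model. Instead it identifies $A_\bb{Q}(d)$ with the endomorphism ring of the unit $P_dh_\bb{S}$ directly, by using the derivative functor $\partial_\ast$ to produce a ring map $\theta_d\colon R_\bb{Q}(d)\to\bb{Q}^d$ and then proving $\theta_d$ is an isomorphism by an inductive argument on $d$ (this is the substantial Lemma in \S5; your first step treats the analogous identification as immediate from Glasman, which is plausible but would still need to be checked carefully). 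Second, rather than using all $d$ idempotents at once, the paper peels off only the \emph{top} idempotent $e_d$: it shows that the kernel of the natural map $A_\bb{Q}(d)\to A_\bb{Q}(d-1)$ is generated by $e_d$, and that $e_d$ acts as the identity on $D_dh_\bb{S}$ and as zero on $P_{d-1}h_\bb{S}$ (by examining its image under $P_{d-1}$ and $\partial_d$). This yields $\exc{d}(\s,\s)_\bb{Q}\simeq\homog{d}(\s,\s)_\bb{Q}\times\exc{d-1}(\s,\s)_\bb{Q}$ and the full splitting follows by induction. Your ``all idempotents at once, identify the $i$th factor via stratification'' plan would also work, but the paper's inductive approach has the advantage that the compatibility you flag as the main technical step (matching Yoshida's $e_i$ with the Goodwillie layer $D_i$) only has to be checked for the top idempotent, where it reduces to the easily verified facts $\pi_0(P_{d-1})(e_d)=0$ and $\pi_0(\partial_d)(e_d)=1$.
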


Results of this form are already present in the literature. For instance, Kuhn~\cite{Kuhn} and McCarthy~\cite{McCarthy} used the Kuhn-McCarthy pullback square together with the rational Tate vanishing for finite groups to prove that every $d$-excisive functor splits as a product of its homogeneous layers after rationalization. Our proof of~\cref{thm: main thm} (an outline is given at the end of the introduction) avoids both the use of the Kuhn-McCarthy pullback square and rational Tate vanishing. 

We can provide an even simpler description of $\exc{d}(\s^\omega,\s)_\bb{Q}$
using Goodwillie's classification of homogeneous functors (as described above)
and results from rational equivariant stable homotopy theory. 
Greenlees~\cite{greenleesreport} conjectured that the category of rational $G$-spectra (for $G$ any compact Lie group) has an algebraic model: an algebraic category $\es{A}(G)$ and an equivalence $\s^G_\bb{Q} \simeq \es{A}(G)$. This program has had many successes, for example \cite{BGKS, GreenleesShipleyTorus, Wimmer, BGK_elliptic, WIT3.1, WIT3.2}, see \cite{BarnesKedziorek} for a survey and introduction to the area. A key method in many of these results is using the idempotents of the rationalised Burnside ring $A_\bb{Q}(G)$. Applying the fact that the category of rational chain complexes with an action of $\Sigma_d$, $\sf{Ch}(\bb{Q}[\Sigma_d])$, is an algebraic model for $\s^{B\Sigma_d}$ gives the following. 

\begin{xxcor}\label{cor: main cor}
There is an equivalence
\[
\exc{d}(\s^\omega,\s)_\bb{Q} \simeq \prod_{1 \leq i \leq d} \sf{Ch}(\bb{Q}[\Sigma_i]),
\]   
of $\infty$-categories.
\end{xxcor}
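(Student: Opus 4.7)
The plan is to derive the corollary by combining Theorem~\ref{thm: main thm} with two further inputs already recalled in the introduction: Goodwillie's classification of homogeneous functors and the algebraic model for rational Borel equivariant spectra over a finite group. Under this plan, the substantive work has already been done in Theorem~\ref{thm: main thm}; the corollary amounts to identifying each factor in the product decomposition separately.

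First, I would invoke Theorem~\ref{thm: main thm}, which reduces the statement to producing, for each $1 \leq i \leq d$, an equivalence of $\infty$-categories
\[
\homog{i}(\s,\s)_\bb{Q} \simeq \sf{Ch}(\bb{Q}[\Sigma_i]).
\]
To obtain this, I would apply Goodwillie's classification $\homog{i}(\s,\s) \simeq \s^{B\Sigma_i}$ (with $\s^{B\Sigma_i}$ the $\infty$-category of Borel $\Sigma_i$-spectra), which is realised by the $i$-th derivative functor. Since rationalisation is a smashing Bousfield localisation and this classification is induced by colimit-preserving functors between presentable stable $\infty$-categories, the equivalence transports to the rational setting, giving $\homog{i}(\s,\s)_\bb{Q} \simeq (\s_\bb{Q})^{B\Sigma_i}$.

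The final ingredient is the fact that $\sf{Ch}(\bb{Q}[\Sigma_i])$ is an algebraic model for rational Borel $\Sigma_i$-spectra, namely $(\s_\bb{Q})^{B\Sigma_i} \simeq \sf{Ch}(\bb{Q}[\Sigma_i])$. Stringing the three equivalences together and taking the product over $1 \leq i \leq d$ yields the corollary. I do not anticipate any substantial obstacle: the one point requiring a little care is checking that the compatibility of Goodwillie's classification with rationalisation lines up with the chosen algebraic model on the other side, but this is routine given the functoriality of the classification and the fact that all equivalences in play are between presentable stable $\infty$-categories.
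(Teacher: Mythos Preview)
Your proposal is correct and follows essentially the same route as the paper: apply Theorem~\ref{thm: main thm}, then Goodwillie's classification $\homog{i}(\s,\s)\simeq \s^{B\Sigma_i}$ rationalised to $(\s_\bb{Q})^{B\Sigma_i}$, and finally the algebraic model $(\s_\bb{Q})^{B\Sigma_i}\simeq \sf{Ch}(\bb{Q}[\Sigma_i])$. The paper makes the last step explicit via Shipley's equivalence $\s_\bb{Q}\simeq \sf{Ch}_\bb{Q}$ applied to functor categories, $\Fun(B\Sigma_i,\s_\bb{Q})\simeq \Fun(B\Sigma_i,\sf{Ch}_\bb{Q})\simeq \sf{Ch}(\bb{Q}[\Sigma_i])$, which is exactly the input you are invoking.
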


We also obtain the following seemingly new result, as \cref{homogeneous modules}, describing rational 
homogenous functors as a category of modules. We do not expect this result to hold integrally. 

\begin{xxcor}\label{main module corollary}
The $d$-homogeneous functor $D_d(\Sigma^\infty\Omega^\infty)$ admits the structure of a commutative algebra object in the category of rational endofunctors of spectra, and there is an equivalence  
\[
\homog{d}(\s^\omega,\s)_\bb{Q} \simeq \sf{Mod}_{\Fun_\ast(\s^\omega,\s)_\bb{Q}}(D_d(\Sigma^\infty\Omega^\infty)),
\]
of $\infty$-categories.
\end{xxcor}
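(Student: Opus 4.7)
The plan is to deduce the corollary from \cref{thm: main thm} combined with the standard correspondence between modules over an idempotent commutative algebra and smashing Bousfield localisations. First I would equip $\Fun(\s,\s)_\bb{Q}$ with its Day convolution symmetric monoidal structure coming from the smash product on $\s$; under this, commutative algebras in $\Fun(\s,\s)_\bb{Q}$ are precisely lax symmetric monoidal functors $\s \to \s_\bb{Q}$. Since $\Sigma^\infty\Omega^\infty$ is a composite of the strong symmetric monoidal functor $\Sigma^\infty$ with its lax symmetric monoidal right adjoint $\Omega^\infty$, it is lax symmetric monoidal and hence a commutative algebra in $\Fun(\s,\s)_\bb{Q}$.

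Next, I would use the central idempotents $e_1, \ldots, e_d \in A(d)_\bb{Q}$ of the rationalised Goodwillie-Burnside ring which already underlie the proof of \cref{thm: main thm} to split $\Sigma^\infty\Omega^\infty$ rationally as a product of commutative algebras
\[
\Sigma^\infty\Omega^\infty \simeq \prod_{i \geq 1} D_i(\Sigma^\infty\Omega^\infty),
\]
where each factor is the summand picked out by the corresponding idempotent $e_i$. In particular, each $D_d(\Sigma^\infty\Omega^\infty)$ inherits a canonical commutative algebra structure as a direct factor, which establishes the first assertion of the corollary. The idempotent relation $e_d^2 = e_d$ further translates into the fact that the multiplication $D_d(\Sigma^\infty\Omega^\infty) \otimes D_d(\Sigma^\infty\Omega^\infty) \to D_d(\Sigma^\infty\Omega^\infty)$ is an equivalence, so that $D_d(\Sigma^\infty\Omega^\infty)$ is an idempotent commutative algebra.

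Finally, I would invoke the standard fact that the category of modules over an idempotent commutative algebra $A$ in a presentable symmetric monoidal stable $\infty$-category $\mc{C}$ coincides with the smashing Bousfield localisation at $A$ and is equivalent to the full subcategory of $A$-local objects. Applied to $\Fun(\s,\s)_\bb{Q}$ and $A = D_d(\Sigma^\infty\Omega^\infty)$, this identifies $\sf{Mod}_{\Fun(\s,\s)_\bb{Q}}(D_d(\Sigma^\infty\Omega^\infty))$ with the smashing summand singled out by $e_d$, which by \cref{thm: main thm} is precisely $\homog{d}(\s,\s)_\bb{Q}$. \textbf{The main obstacle} is verifying that the smashing localisation at $D_d(\Sigma^\infty\Omega^\infty)$ really coincides with the $d$-homogeneous summand of \cref{thm: main thm}; this amounts to tracking the action of the idempotent $e_d$ through Glasman's identification of excisive functors with spectral Mackey functors on $\sf{Epi}_{\leq d}$ and confirming that $e_d$ acts as projection onto the $d$-homogeneous layer — the same compatibility that lies at the heart of the proof of \cref{thm: main thm}.
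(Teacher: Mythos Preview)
Your approach is essentially the paper's, but with two imprecisions worth correcting.

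First, the idempotents $e_1,\dots,e_d$ live in $A_\bb{Q}(d)\cong \pi_0\sf{End}(P_dh_\bb{S})$, so they act on the \emph{unit} $P_dh_\bb{S}=P_d\Sigma^\infty\Omega^\infty$ of $\exc{d}(\s,\s)_\bb{Q}$, not on $\Sigma^\infty\Omega^\infty$ itself; the splitting you want is of $P_dh_\bb{S}$, and your infinite product should be the finite one $\prod_{1\le i\le d}D_ih_\bb{S}$. This also makes your lax-monoidal detour unnecessary: the paper simply observes that $P_dh_\bb{S}$ is the monoidal unit (hence automatically a commutative algebra in $\Fun(\s,\s)_\bb{Q}$ via \cite[Proposition~2.35]{AroneBarthelHeardSanders}), and that an idempotent localisation $P_dh_\bb{S}[e_d^{-1}]$ of a commutative algebra is again one. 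The equivalence $D_dh_\bb{S}\simeq P_dh_\bb{S}[e_d^{-1}]$ is \cref{prop: homog fibre splits}, so the algebra structure on $D_dh_\bb{S}$ comes for free, and $\sf{Mod}_\bb{Q}(D_dh_\bb{S})$ is then the $e_d$-local factor of $\sf{Mod}_\bb{Q}(P_dh_\bb{S})\simeq\exc{d}(\s,\s)_\bb{Q}$, which the proof of \cref{thm: splitting of exc} already identifies with $\homog{d}(\s,\s)_\bb{Q}$.

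Second, the ``main obstacle'' you name---tracking $e_d$ through Glasman's Mackey-functor model---is not how the paper proceeds and is not needed. The paper instead computes directly, in \cref{lem: ed on Pd-1} and \cref{lem: ed on Dd}, that $e_d$ acts as zero on $P_{d-1}h_\bb{S}$ and as the identity on $D_dh_\bb{S}$, by pushing $e_d$ through the ring maps $R_\bb{Q}(d)\to R_\bb{Q}(d-1)$ and $R_\bb{Q}(d)\to\bb{Q}$ induced by $P_{d-1}$ and $\partial_d$ respectively. That is a short endomorphism-ring calculation using \cref{lem: Goodwillie burnside as endo ring}, not a Mackey-functor argument.
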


\subsection*{Outline of the proof}
Taking motivation from equivariant homotopy theory, the first step is to understand the rational Goodwillie-Burnside ring. Following Yoshida, we show (\cref{cor: splitting of rational ABR}) that the rational Goodwillie-Burnside ring splits
\[
A_\bb{Q}(d) \cong \prod_{1 \leq i \leq d}\bb{Q},
\]
by constructing an injective ring map $A_\bb{Q}(d) \to \prod_{1 \leq i \leq d}\bb{Q}$ using an epi-mono factorization of the category $\sf{Epi}_{\leq d}$. 

We can represent this ring map as a matrix. Computing the idempotents of the rational Goodwillie-Burnside ring then amounts to inverting this matrix (\cref{lem: idempotent}). In particular, for each $1 \leq i \leq d$ we obtain an idempotent $e_i$ of the Goodwillie-Burnside ring $A_\bb{Q}(d)$, corresponding to the projection on the $i$-th factor in the product $\prod_{1 \leq i \leq d}\bb{Q}$. 

By work of Arone, Barthel, Heard and Sanders~\cite{AroneBarthelHeardSanders}, the Goodwillie-Burnside ring is the endomorphism ring of the unit of the homotopy category of $d$-excisive functors (\cref{lem: endomorphism vs yoshida}), and hence these idempotents lift to the category of rational excisive functors, and induce a splitting of the category of rational excisive functors into an $e_d$-local piece and its orthogonal complement a $(1-e_d)$-local piece.

The final step in the proof is to identify the orthogonal pieces in the splitting. To do this, we compute the kernel of the natural map $A_\bb{Q}(d) \to A_\bb{Q}(d-1)$ coming from the inclusion of a subcategory $\sf{Epi}_{\leq d-1}$ into $\sf{Epi}_{\leq d}$, showing that it is generated by the top idempotent $e_d$ (\cref{ex:inclusion of subcategory epi}). This map on rational Goodwillie-Burnside rings corresponds to the universal map $P_dF \to P_{d-1}F$ (\cref{lem: Goodwillie burnside as endo ring}). It follows that for $F: \s \to \s$ a rational $d$-excisive functor, peeling off the top idempotent splits the Goodwillie tower of $F$ into two pieces,

\tikz[
overlay]{
    \filldraw[fill=yellow!50,draw=yellow!50] (6.7,-3.25) rectangle (12.5,0.25);
}
\tikz[
overlay]{
    \filldraw[fill=orange!50,draw=orange!50] (2.8,-3.25) rectangle (5.8,-1.25);

    \node(P0) at (4.3,-3) {$e_d$-local};
    \node(P0) at (9.5,-3) {$(1-e_d)$-local};

}
\[\begin{tikzcd}
	F && {P_{d-1}F} & \cdots & {P_1F} \\
	  {D_dF} && {D_{d-1}F} && {D_1F} \\
	\arrow[from=1-1, to=1-3]
	\arrow[from=1-3, to=1-4]
	\arrow[from=1-4, to=1-5]
	\arrow[from=2-1, to=1-1]
	\arrow[from=2-3, to=1-3]
	\arrow["\simeq"', from=2-5, to=1-5]
\end{tikzcd}\]

where the orange box is the $e_d$-local component and the yellow box is the $(1-e_d)$-local component. 
\cref{thm: splitting of exc} uses this to split the category of $d$-excisive functors
into $d$-homogeneous functors and $(d-1)$-excisive functors. The result follows by induction.

\subsection*{Contribution of this paper}

Our proof of the rational splitting of the category of excisive functors is very different to those currently in the literature. 
It relies mainly on the formal properties of the category as developed in \cite{AroneBarthelHeardSanders}, primarily 
the symmetric monoidal structure of the category and the structure of the rational Goodwillie-Burnside ring. 
The authors believe there should be further results along these lines, where one has some
generalisation of (topological) Mackey functors and can prove a splitting result
via properties of idempotents in the rational setting. 
Indeed, recent work of the first two authors with Hill on incomplete Mackey functors \cite{BHK24},
uses a similar splitting approach with rational idempotents and again connects to rational Tate vanishing.
The authors believe this approach will be valuable in a setting where the analogue of
rational Tate vanishing is not accessible. 

We believe that \cref{main module corollary} is new and of some independent interest, providing
a description of rational homogeneous functors in terms of a category of modules. 

The authors also want to add to the efforts of Arone, Barthel, Heard and Sanders~\cite{AroneBarthelHeardSanders}
in advertising the results and methods of Yoshida~\cite{YoshidaBurnside}. In this paper we concentrate on the rational case, and we explicitly describe and draw the categories $\sf{Epi}_{\leq 2}$, $\sf{Epi}_{\leq 3}$ and $\sf{Epi}_{\leq 4}$, calculate a complete set of idempotents for associated rational Goodwillie-Burnside rings in these cases and offer a comparison to the case of $\sf{Orb}_{G}$ for $G$ the cyclic group of order 6.

\subsection*{Acknowledgements}
We are grateful to Luca Pol for helpful conversations relating to this work and the anonymous referee for their useful comments. The authors would like to thank the Fondation des Treilles for its support and hospitality during the workshop ``Modelling symmetries using algebra'', where work on this paper was undertaken. The second and third author were supported by the Nederlandse Organisatie voor Wetenschappelijk Onderzoek (Dutch Research Council) Vidi grant no VI.Vidi.203.004. In the final stages of preparing this article for publication, the third author was supported by the Engineering and Physical Sciences Research Council under grant number EP/Z534705/1.

\section{Rational abstract Burnside rings}\label{sec: ABR}
In this section, we survey Yoshida's theory of abstract Burnside rings over the rational numbers~\cite{YoshidaBurnside}. It describes a process of assigning a ring to a finite skeletal category, which in the case of the orbit category of a finite group $G$ recovers (\cref{ex: C6}) the rational Burnside ring $A(G)$ from rational equivariant algebra~\cite[Example 3.3.(a)]{YoshidaBurnside}. We work rationally since our applications are rational, but the theory holds more generally, see e.g.,~\cite{YoshidaBurnside, YOT}.

Let $\es{C}$ denote a finite skeletal category, i.e., a category with finitely many objects, finitely many morphisms and such that if $c \cong d$, then $c=d$. Write $\bb{Q}^\es{C}$ for the product $\prod_{c \in \es{C}}\bb{Q}$ and denote by $A_\bb{Q}(\es{C})$ the free $\bb{Q}$-module generated by the set of object of $\es{C}$. Define the \emph{Burnside homomorphism} to be the $\bb{Q}$-linear map,
\[
\phi: A_\bb{Q}(\es{C}) \longrightarrow \bb{Q}^\es{C}, \ \ d \longmapsto (|\es{C}(c, d)|)_{c \in \es{C}},
\]
where $\es{C}(c, d)$ is the set of morphisms in $\es{C}$ from $c$ to $d$.

\begin{definition}
The $\bb{Q}$-module $A_\bb{Q}(\es{C})$ is called a \emph{(rational) abstract Burnside ring} if the Burnside homomorphism is an injective $\bb{Q}$-module map.
\end{definition}

If the Burnside homomorphism $\phi: A_\bb{Q}(\es{C}) \to \bb{Q}^\es{C}$ is injective, then it is an isomorphism since domain and codomain are $\bb{Q}$-vector spaces of the same dimension. As such, there is a unique ring structure on $A_\bb{Q}(\es{C})$ such that the Burnside homomorphism is a ring map. 

By construction, the Burnside homomorphism is represented by the \emph{hom-set $|\es{C}| \times |\es{C}|$-matrix} $H$ given by 
\[
H_{cd} = |\es{C}(c,d)|.
\]
This observation reduces the analysis of the Burnside homomorphism into a $\bb{Q}$-linear algebra problem.

To guarantee that $A_\bb{Q}(\es{C})$ becomes an abstract Burnside ring, we restrict to those categories $\es{C}$  which may be equipped with a factorization system. This amounts to providing a method through which we can study the hom-set matrix through standard matrix decompositions.

\begin{definition}
A category $\es{C}$ has an \emph{epi-mono factorization system} if there is a pair of wide\footnote{A \emph{wide} subcategory of a category $\es{C}$ is a subcategory containing all the objects of $\es{C}$.} subcategories $(\es{E}, \es{M})$ such that
\begin{enumerate}
    \item each morphism in $\es{E}$ (resp. $\es{M}$) is an epimorphism (resp. monomorphism); 
    \item the subcategories contain all isomorphisms;
    and,
    \item every morphism in $\es{C}$ factors uniquely (up to isomorphism) as a morphism in $\es{E}$ followed by a morphism in $\es{M}$.
\end{enumerate}
\end{definition}

If $\es{C}$ is a finite skeletal category, it follows that for each object $c \in \es{C}$
\[
\Aut(c) = \es{E}(c,c) = \es{M}(c,c),
\]
where $ \es{E}(c,d) \coloneqq \es{C}(c,d) \cap \es{E} \subseteq \sf{Epi}(c,d)$ and $\es{M}(c,d) \coloneqq \es{C}(c,d) \cap \es{M} \subseteq \sf{Mono}(c,d)$, see e.g., ~\cite[Lemma 1.3]{YOT}.

We may define $|\es{C}| \times |\es{C}|$-matrices that record the data of an epi-mono factorization system $(\es{E}, \es{M})$: the \emph{epimorphism matrix} $E$ is defined by $E_{cd} = |\es{E}(c,d)|/|\Aut(d)|$, the \emph{monomorphism matrix} $M$ is defined by $M_{cd} = |\es{M}(c,d)|/|\Aut(c)|$, and, the \emph{automorphism matrix} $A$ is defined  by $A_{cd} = |\Aut(c)|\delta_{cd}$, where $\delta_{cd}$ is the Kronecker delta. Despite being defined as rational numbers, the entries of these matrices are integral: for example, for the epimorphism matrix this follows since the automorphism group $\Aut(d)$ acts freely on $\es{E}(c,d)$ by the cancellation property of epimorphisms. The case for the monomorphism matrix is dual. Moreover, since $\Aut(c) = \es{E}(c,c) = \es{M}(c,c)$, we have that $E_{cc}=1=M_{cc}$ for each $c \in \es{C}$.

\begin{lem}[{\cite[Lemma 4.1]{YoshidaBurnside}}]\label{lem: LDU decomp}
The epimorphism, monomorphism and automorphism matrices are integral and
\[
H = EAM.
\]
\end{lem}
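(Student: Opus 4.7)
The plan is to treat the two claims separately: integrality of the matrices, then the product formula $H = EAM$.

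For integrality: the matrix $A$ is integral by construction. For $E$, the group $\Aut(d)$ acts on $\es{E}(c,d)$ by post-composition, and this action is free by the right-cancellation property of epimorphisms (if $\alpha \circ e = e$ for $e$ epic and $\alpha \in \Aut(d)$, then $\alpha = \id$). Hence $\es{E}(c,d)$ decomposes into $\Aut(d)$-orbits each of size $|\Aut(d)|$, so $E_{cd} = |\es{E}(c,d)|/|\Aut(d)|$ is a non-negative integer. The argument for $M$ is formally dual, using left-cancellation of monomorphisms and pre-composition by $\Aut(c)$.

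For the product formula, I would first simplify:
\[
(EAM)_{cd} \;=\; \sum_{e} E_{ce}\, |\Aut(e)|\, M_{ed} \;=\; \sum_{e} \frac{|\es{E}(c,e)| \cdot |\es{M}(e,d)|}{|\Aut(e)|},
\]
and show this equals $|\es{C}(c,d)|$. Introduce the set
\[
S \;=\; \bigsqcup_{e \in \es{C}} \es{E}(c,e) \times \es{M}(e,d)
\]
together with the composition map $\pi \colon S \to \es{C}(c,d)$, $(e',m') \mapsto m' \circ e'$. Existence of epi-mono factorizations makes $\pi$ surjective. For the fibers, fix $f \in \es{C}(c,d)$ with a chosen factorization $f = m \circ e$ through an intermediate object $x$. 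Any other factorization $f = m' \circ e'$ with intermediate object $x'$ is, by the uniqueness clause of the factorization system, related to $(e,m)$ by an isomorphism $x \cong x'$; skeletality then forces $x' = x$, and the comparison isomorphism is an element of $\Aut(x)$. Hence $\pi^{-1}(f)$ lies entirely in the summand $\es{E}(c,x) \times \es{M}(x,d)$ and is a torsor under $\Aut(x)$, acting by $\alpha \cdot (e', m') = (\alpha \circ e', m' \circ \alpha^{-1})$.

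Consequently, the restriction $\pi|_{\es{E}(c,e) \times \es{M}(e,d)}$ is $|\Aut(e)|$-to-$1$ onto its image, which is precisely the set of $f \in \es{C}(c,d)$ whose canonical intermediate object is $e$. Dividing by $|\Aut(e)|$ and summing over $e$ partitions $\es{C}(c,d)$ according to intermediate object, giving
\[
\sum_{e} \frac{|\es{E}(c,e)| \cdot |\es{M}(e,d)|}{|\Aut(e)|} \;=\; \sum_{e} \bigl|\{ f \in \es{C}(c,d) : x_f = e\}\bigr| \;=\; |\es{C}(c,d)| \;=\; H_{cd}.
\]
The main obstacle is the uniqueness bookkeeping in the preceding paragraph: one must combine the up-to-isomorphism uniqueness of factorizations with the skeletality hypothesis to conclude that the intermediate object is literally constant on a fiber of $\pi$, so that the factor of $1/|\Aut(e)|$ cleanly accounts for the fiber sizes. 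Once this is in place, the identity $H = EAM$ follows from a single counting argument.
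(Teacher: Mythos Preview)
Your proof is correct. The paper does not actually supply its own proof of this lemma: it is stated with a citation to Yoshida and no argument is given for the identity $H = EAM$. The only part the paper discusses is integrality, in the paragraph immediately preceding the lemma, and there it gives precisely your argument (free action of $\Aut(d)$ on $\es{E}(c,d)$ via cancellation of epimorphisms, with the dual statement for $M$). Your counting argument for $H = EAM$ --- surject the set of epi-mono pairs onto $\es{C}(c,d)$ by composition, use skeletality to pin down the intermediate object, and identify each fiber as an $\Aut$-torsor --- is the standard one and is exactly what is behind Yoshida's Lemma~4.1, so there is nothing to compare beyond noting that you have filled in what the paper leaves to the reference.
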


By~\cite[\S4.2]{YoshidaBurnside}, the epimorphism matrix is conjugate to a lower triangular matrix, the monomorphism matrix is conjugate to an upper triangular one and the automorphism matrix is diagonal, hence the decomposition of $H$ from~\cref{lem: LDU decomp} implies that $\det(H) = \prod_{c \in \es{C}} |\Aut(c)|$, which is non-zero since $\Aut(c) \neq \varnothing$ for every $c\in \es{C}$. It follows that the hom-set matrix is rationally invertible and the Burnside homomorphism is an isomorphism. 

\begin{lem}\label{cor: splitting of rational ABR}
Let $\es{C}$ be a finite skeletal category. If $\es{C}$ has an epi-mono factorization system, then the rational Burnside homomorphism
\[
\phi: A_\bb{Q}(\es{C}) \longrightarrow \bb{Q}^\es{C},
\]
is an isomorphism. In particular, $A_\bb{Q}(\es{C})$ is a rational abstract Burnside ring.
\end{lem}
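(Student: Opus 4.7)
The plan is to reduce the statement to a determinant computation. The Burnside homomorphism $\phi$ is $\bb{Q}$-linear, and when expressed in the evident bases, namely the set of objects of $\es{C}$ on the domain side and the tuples indexed by objects of $\es{C}$ on the codomain side, it is represented by the hom-set matrix $H$ with $H_{cd} = |\es{C}(c,d)|$. So the statement that $\phi$ is an isomorphism reduces to showing that $H$ is invertible over $\bb{Q}$, and once this is established, injectivity (hence the abstract Burnside ring property) and surjectivity follow at once, as the domain and codomain are $\bb{Q}$-vector spaces of the same finite dimension $|\es{C}|$.

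To show that $H$ is invertible, I would use the factorisation $H = EAM$ supplied by \cref{lem: LDU decomp} and argue that each of $E$, $A$, $M$ has nonzero determinant. The automorphism matrix $A$ is diagonal with entries $|\Aut(c)|$, each at least $1$ since $\id_c \in \Aut(c)$, so $A$ is invertible with $\det(A) = \prod_{c \in \es{C}} |\Aut(c)|$. For $E$ and $M$, I would choose a total ordering of the finite set of objects that refines the preorder induced by the existence of non-isomorphism epimorphisms (such an ordering exists by the argument recalled in~\cite[\S4.2]{YoshidaBurnside}, using that $\es{C}$ is finite skeletal so the only endomorphisms in $\es{E}$ or $\es{M}$ are automorphisms). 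With respect to this ordering, $E$ becomes lower triangular and $M$ upper triangular, and since $E_{cc} = M_{cc} = 1$ by the observation that $\Aut(c) = \es{E}(c,c) = \es{M}(c,c)$, both are unit triangular with $\det(E) = \det(M) = 1$.

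Combining these gives $\det(H) = \prod_{c \in \es{C}} |\Aut(c)| \neq 0$, so $H$ is invertible over $\bb{Q}$, and therefore $\phi$ is an isomorphism of $\bb{Q}$-vector spaces; the abstract Burnside ring structure is then the unique ring structure on $A_\bb{Q}(\es{C})$ making $\phi$ into a ring map. The only subtle step is the triangularisation of $E$ and $M$, which relies on the existence of a compatible ordering on the objects; however, this is entirely standard for finite skeletal categories equipped with a factorisation system, and in our proof it can simply be cited from Yoshida. Everything else is bookkeeping.
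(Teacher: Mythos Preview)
Your proposal is correct and follows essentially the same argument as the paper: use the decomposition $H = EAM$ from \cref{lem: LDU decomp}, observe that $E$ and $M$ are (conjugate to) unit triangular matrices while $A$ is diagonal with entries $|\Aut(c)| \neq 0$, and conclude $\det(H) = \prod_{c}|\Aut(c)| \neq 0$ so that $\phi$ is a $\bb{Q}$-linear isomorphism. The paper's proof is just a more condensed version of what you wrote, citing the same passage of Yoshida for the triangularisation step.
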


\begin{rem}
The rational situation is quite different from the integral case. Integrally, Yoshida \cite[\S3.5]{YoshidaBurnside} also requires the existences of certain coequalisers in $\es{C}$. This discrepancy amounts to the fact that integrally it is not enough for the Burnside homomorphism to be injective, but rather we also require the image to be closed under multiplication.
\end{rem}

The strength in the rational Burnside ring is that the isomorphism  of~\cref{cor: splitting of rational ABR} allows us to identify idempotents through the process of M\"obius inversion, i.e., by inverting the hom-set matrix.

\begin{lem}[{\cite[Lemma 5.4]{YoshidaBurnside}}]\label{lem: idempotent}
Let $\es{C}$ be a finite skeletal category. If $\es{C}$ has an epi-mono factorization system, then the idempotent $e_d$ of $A_\bb{Q}(\es{C})$ corresponding to $d \in \es{C}$ is given by
\[
e_d = \sum_{c \in \es{C}} (H^{-1})_{cd}~c,
\]
where $H$ is the hom-set matrix. 
\end{lem}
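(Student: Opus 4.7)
The plan is purely linear-algebraic, relying on the fact that \cref{cor: splitting of rational ABR} upgrades the Burnside homomorphism to a ring isomorphism. By construction, the ring structure on $A_\bb{Q}(\es{C})$ is the unique one making $\phi$ an isomorphism of rings to $\bb{Q}^\es{C}$ (where the latter has componentwise multiplication). Consequently, the primitive idempotents of $A_\bb{Q}(\es{C})$ are exactly the $\phi$-preimages of the primitive idempotents $\epsilon_d \in \bb{Q}^\es{C}$, where $\epsilon_d$ is the tuple whose $c$-entry is $\delta_{cd}$.

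First, I would unwind what $\phi$ looks like on basis vectors. Writing an arbitrary element of $A_\bb{Q}(\es{C})$ as $x = \sum_{c} \alpha_c\, c$ and using $\phi(c) = (|\es{C}(c',c)|)_{c'} = (H_{c'c})_{c'}$, one computes that the $c'$-entry of $\phi(x)$ equals $\sum_c H_{c'c}\alpha_c = (H\alpha)_{c'}$. In other words, after identifying both sides with $\bb{Q}^{|\es{C}|}$ via the object-indexed bases, the map $\phi$ is represented by the hom-set matrix $H$.

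Next, I would solve $\phi(e_d) = \epsilon_d$ explicitly. Since the decomposition $H = EAM$ of \cref{lem: LDU decomp} forces $\det(H) = \prod_c |\Aut(c)| \neq 0$, the matrix $H$ is invertible over $\bb{Q}$, so the equation $H\alpha = \epsilon_d$ has the unique solution $\alpha = H^{-1}\epsilon_d$. Reading off the components, $\alpha_c = (H^{-1})_{cd}$, i.e.\ the coefficients of $e_d$ are precisely the $d$-th column of $H^{-1}$.

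Finally, I would translate back to $A_\bb{Q}(\es{C})$: the primitive idempotent corresponding to the object $d$ is
\[
e_d = \sum_{c \in \es{C}} (H^{-1})_{cd}\, c,
\]
as claimed. There is no real obstacle here beyond correctly tracking the convention for the matrix $H$ (whose rows are indexed by the source and columns by the target); once that is fixed, the result is immediate from the fact that $\phi$ is a ring isomorphism represented by $H$.
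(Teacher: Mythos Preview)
Your proposal is correct. The paper itself does not supply a proof of this lemma, instead citing Yoshida's original work; your argument is the natural linear-algebraic one and matches what one finds there: since $\phi$ is a ring isomorphism represented (in the object-indexed bases) by left multiplication by $H$, the primitive idempotents of $A_\bb{Q}(\es{C})$ are the $\phi$-preimages of the standard basis vectors $\epsilon_d$ of $\bb{Q}^\es{C}$, and solving $H\alpha=\epsilon_d$ gives $\alpha_c=(H^{-1})_{cd}$.
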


The abstract Burnside ring of Yoshida recovers the classical Burnside ring for a finite group $G$, see~\cite[Example 3.3.(a)]{YoshidaBurnside}. We will illustrate that on a simple example.

\begin{ex}\label{ex: C6}
Let $\es{C}$ be the orbit category of the $C_6$, the cyclic group of order $6$. Since 
\[
\sf{Orb}_{C_6}(C_6/L, C_6/K) = (C_6/K)^L,
\]
we have that the hom-set matrix representing the abstract Burnside homomorphism for $A_\bb{Q}(C_6)$ is given by 
\[
H
=\begin{pmatrix}
    |(C_6/C_6)^{C_6}| & |(C_6/C_3)^{C_6}| & |(C_6/C_2)^{C_6}| & |(C_6/C_1)^{C_6}|\\
    |(C_6/C_6)^{C_3}| & |(C_6/C_3)^{C_3}| & |(C_6/C_2)^{C_3}| & |(C_6/C_1)^{C_3}| \\
    |(C_6/C_6)^{C_2}| & |(C_6/C_3)^{C_2}| & |(C_6/C_2)^{C_2}| & |(C_6/C_1)^{C_2}| \\
    |C_6/C_6| & |C_6/C_3| & |C_6/C_2| & |C_6/C_1|
\end{pmatrix}
=\begin{pmatrix}
    1 & 0 & 0 & 0\\
    1 & 2 & 0 & 0 \\
    1 & 0 & 3 & 0 \\
    1 & 2 & 3 & 6
\end{pmatrix}.
\]
The $\bb{Q}$-module $A_\bb{Q}(C_6)$ is additively generated by the orbits $C_6$, $C_6/C_2$, $C_6/C_3$ and $C_6/C_6$, so too is the classical Burnside ring and hence the Burnside homomorphism agrees with the tom Dieck’s Isomorphism~\cite[5.6.4, 5.9.13]{tomDieck}, see also~\cite[Lemma 2.11]{BarnesKedziorek}. The ring structure on $A_\bb{Q}(C_6)$ also agrees with the ring structure on the Burnside ring~\cite[Example 6.1]{BarnesKedziorek}, with multiplication given by
\[
\begin{array}{ccc}
C_6 \times C_6 = 6 \cdot C_6 &
C_6 \times C_6/C_3 = 2\cdot  C_6 &
C_6 \times C_6/C_2 = 3\cdot C_6  \\
C_6/C_3 \times C_6/C_3 = 2 \cdot C_6/C_3 &
C_6/C_2 \times C_6/C_3 = C_6   &
C_6/C_2 \times C_6/C_2 = 3 \cdot C_6/C_2,
\end{array}
\]
with $C_6/C_6$ acting as the multiplicative unit. 

Inverting the hom-set matrix $H$ gives 
\[
H^{-1}=\begin{pmatrix}
    1 & 0 & 0 & 0\\
     -\frac{1}{2} & \frac{1}{2} & 0 & 0 \\
     -\frac{1}{3} & 0 & \frac{1}{3} & 0 \\
     \frac{1}{6} & -\frac{1}{6} & -\frac{1}{6} & \frac{1}{6}
\end{pmatrix}
\]
which leads to an orthogonal decomposition
of the unit into idempotents $e_K$, for $K\leq C_6$ corresponding to projection of $\bb{Q}^4$ to the factor $C_6/K$:

\begin{table}[!ht]
    \centering
    \begin{tabular}{ll}
     $e_{C_1} = \frac{1}{6}\cdot C_6$, & $e_{C_2} = \frac{1}{3}\cdot C_6/C_2 -\frac{1}{6}\cdot C_6$, \\
     & \\
     $e_{C_3} = \frac{1}{2}\cdot C_6/C_3 - \frac{1}{6}\cdot C_6$, \quad \quad  & $e_{C_6} = C_6/C_6 - \frac{1}{2} \cdot C_6/C_3 - \frac{1}{3} \cdot C_6/C_2 + \frac{1}{6} \cdot C_6$. \\
    \end{tabular}
\end{table}

One can check that these formulae agree with the Gluck formulae~\cite[\S3]{Gluck}, see also \cite[Lemma 2.12]{BarnesKedziorek}, for the idempotents in the rational Burnside ring~\cite[Example 6.1]{BarnesKedziorek}. 
\end{ex}

We now examine how functors between finite skeletal categories interact with the abstract Burnside rings. For the remainder of this section, we assume all categories satisfy the conditions of~\cref{cor: splitting of rational ABR}. A functor $f: \es{C} \to \es{D}$ between finite skeletal categories induces a (unique) ring map
\[
f^\ast: A_\bb{Q}(\es{D}) \longrightarrow A_\bb{Q}(\es{C}), 
\]
such that
\[
\phi_\es{C}(f^\ast(d)) = (|\es{D}(f(c), d)|)_{c\in \es{C}},
\]
i.e., $f^\ast: A_\bb{Q}(\es{D}) \to A_\bb{Q}(\es{C})$ is the unique map making the diagram
\[\begin{tikzcd}
	{A_\bb{Q}(\es{D})} & {\bb{Q}^\es{D}} \\
	{A_\bb{Q}(\es{C})} & {\bb{Q}^\es{C}}
	\arrow["{\phi_\es{D}}", from=1-1, to=1-2]
	\arrow["{f^\ast}"', from=1-1, to=2-1]
	\arrow["{f^\ast}", from=1-2, to=2-2]
	\arrow["{\phi_\es{C}}"', from=2-1, to=2-2]
\end{tikzcd}\]
commute, where $f^\ast: \bb{Q}^\es{D} \to \bb{Q}^\es{C}$ is given by precomposition with $f$ upon identifying the products with the sets of functions from the sets of objects to the rationals. If the functor is an inclusion of a full subcategory with a certain condition on hom sets, then the kernel of the induced map on abstract Burnside rings is easily identified.

\begin{lem}\label{lem: kernel of map of burnside rings}
If $i: \es{C} \to \es{D}$ is an inclusion of a full subcategory such that there are no maps in $\es{D}$ from any object $c\in \es{C}$ to any object $d\in \es{D} \setminus \es{C}$, then the ring map 
\[
i^\ast : A_\bb{Q}(\es{D}) \to A_\bb{Q}(\es{C})
\]
has kernel given by the free $\bb{Q}$-module $\langle \es{D} \setminus \es{C} \rangle$  generated by the set $\es{D} \setminus \es{C}$ of objects of $\es{D}$ which are not objects of $\es{C}$.
\end{lem}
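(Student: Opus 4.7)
The plan is to use the defining commutative square for $i^*$ together with the injectivity of the Burnside homomorphism (which is available since by assumption both $\es{C}$ and $\es{D}$ satisfy the hypotheses of~\cref{cor: splitting of rational ABR}) to pin down $i^*$ explicitly on the generating basis $\es{D}$ of $A_\bb{Q}(\es{D})$. The direct-sum decomposition $A_\bb{Q}(\es{D}) = \langle \es{C} \rangle \oplus \langle \es{D} \setminus \es{C} \rangle$ as $\bb{Q}$-modules will then let us read off the kernel.

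First I would treat the objects $d \in \es{D} \setminus \es{C}$. By the characterisation of $i^*$, the Burnside homomorphism satisfies $\phi_\es{C}(i^*(d)) = (|\es{D}(c, d)|)_{c \in \es{C}}$, and the hypothesis that there are no morphisms in $\es{D}$ from any object of $\es{C}$ to $d$ forces every entry of this tuple to be zero. Since $\phi_\es{C}$ is injective by~\cref{cor: splitting of rational ABR}, it follows that $i^*(d) = 0$, giving $\langle \es{D} \setminus \es{C} \rangle \subseteq \ker(i^*)$.

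Next I would show the reverse inclusion by computing $i^*$ on the remaining basis elements $c \in \es{C} \subseteq \es{D}$. Fullness of the inclusion means that $|\es{D}(c', c)| = |\es{C}(c', c)|$ for every $c' \in \es{C}$, and hence $\phi_\es{C}(i^*(c)) = (|\es{C}(c',c)|)_{c' \in \es{C}} = \phi_\es{C}(c)$; applying injectivity of $\phi_\es{C}$ once more gives $i^*(c) = c$. Thus $i^*$ is the identity on the summand $\langle \es{C} \rangle$, so an element $x = x_1 + x_2$ with $x_1 \in \langle \es{C} \rangle$, $x_2 \in \langle \es{D} \setminus \es{C} \rangle$ lies in $\ker(i^*)$ precisely when $x_1 = 0$, which gives the claim.

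No step here looks like a real obstacle; the argument is essentially linear-algebraic once the two hypotheses (fullness, and absence of morphisms out of $\es{C}$ into $\es{D} \setminus \es{C}$) are converted into statements about hom-counts and the injectivity of $\phi_\es{C}$ is invoked. If anything, the subtle point to state carefully is that we are using injectivity of $\phi_\es{C}$ (not of $\phi_\es{D}$) to conclude equalities in $A_\bb{Q}(\es{C})$ from equalities of tuples indexed by $\es{C}$.
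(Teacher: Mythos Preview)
Your argument is correct and follows essentially the same approach as the paper: both compute $i^\ast$ on the basis $\es{D}$, showing it is the identity on objects of $\es{C}$ and zero on objects of $\es{D}\setminus\es{C}$, and then read off the kernel from the direct-sum decomposition. The only difference is that you spell out, via the defining square and the injectivity of $\phi_\es{C}$, \emph{why} $i^\ast$ acts this way on basis elements, whereas the paper simply asserts this and proceeds to the linear-algebra step.
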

\begin{proof}
The functor $i^\ast : A_\bb{Q}(\es{D}) \to A_\bb{Q}(\es{C})$ is the identity on the objects of $\es{C}$ and zero on the objects of $\es{D}$ which are not objects of $\es{C}$, hence each element of $\es{D} \setminus \es{C}$ lies in the kernel of $i^\ast$. The proof is then complete by observing that the dimension of the kernel and the dimension of the free $\bb{Q}$-module $\langle \es{D} \setminus \es{C}\rangle$ coincide.
\end{proof}

\section{Epiorbital categories of epimorphisms}
We will now provide an example of the abstract Burnside ring machinery of~\cref{sec: ABR} by choosing the finite skeletal category to be the (skeleton of the) category $\sf{Epi}_{\leq d}$ of epimorphisms of finite sets of cardinality at most $d$. In the case $d=2$, there is an equivalence between the category of epimorphisms and the obit category $\sf{Orb}_{C_2}$ of the group $C_2$ of two elements. In general, however, the category $\sf{Epi}_{\leq d}$ is not equivalent to the orbit category of a finite group.

\begin{definition}
For each $d \geq 1$, define the \emph{rational Goodwillie Burnside ring} $A_\bb{Q}(d)$ to be the rational abstract Burnside ring associated to the category $\sf{Epi}_{\leq d}$. 
\end{definition}

This terminology follows \cite[Section 8]{AroneBarthelHeardSanders}. In~\cref{sec: goodwillie calculus} we will identify (\cref{lem: Goodwillie burnside as endo ring}) the abstract Burnside ring with the endomorphism ring of the unit of the $\infty$-category of rational $d$-excisive functors. 

\begin{ex}
The category $\sf{Epi}_{\leq 2}$ (or equivalently $\sf{Orb}_{C_2}$) may be displayed pictorially as
\[\begin{tikzcd}
	{[1]} && {[2]}
	\arrow[from=1-1, to=1-1, loop, in=55, out=125, distance=10mm]
	\arrow[from=1-3, to=1-1]
	\arrow["2"{description}, from=1-3, to=1-3, loop, in=55, out=125, distance=10mm]
\end{tikzcd}\]
where a number attached to an arrow indicates the cardinality of the set of arrows of this form. No number indicates a single arrow. In this case we have that the hom-set matrix and its inverse are given by
\[
H =
\begin{pmatrix}
    1 & 0 \\
    1 & 2 \\
\end{pmatrix},\quad \quad 
H^{-1}  =
\begin{pmatrix}
    1 & 0 \\
    -\frac{1}{2} & \frac{1}{2} \\
\end{pmatrix}.
\]
We are predominantly interested in understanding the idempotents of the rational Goodwillie Burnside ring $A_\bb{Q}(d)$. Recall from~\cref{lem: idempotent} that these idempotents  in general are given by
\[
e_d = \sum_{c \in \es{C}} (H^{-1})_{cd}~c,
\]
hence, for the rational Goodwillie-Burnside ring $A_\bb{Q}(2)$ the idempotents are given by
\begin{table}[ht]
    \centering
    \begin{tabular}{cc}
      $e_2 = \frac{1}{2}\cdot[2]$, \quad \quad & $e_1 = [1] -\frac{1}{2}\cdot[2]$. \\
    \end{tabular}
\end{table}
\end{ex}

\begin{ex}
The category $\sf{Epi}_{\leq 3}$ may be displayed pictorially as
\[\begin{tikzcd}
	& {[3]} \\
	{[1]} && {[2]}
	\arrow["6"{description}, from=1-2, to=1-2, loop, in=55, out=125, distance=10mm]
	\arrow[from=1-2, to=2-1]
	\arrow["6"{description}, from=1-2, to=2-3]
	\arrow[from=2-1, to=2-1, loop, in=55, out=125, distance=10mm]
	\arrow[from=2-3, to=2-1]
	\arrow["2"{description}, from=2-3, to=2-3, loop, in=55, out=125, distance=10mm]
\end{tikzcd}\] 
In this case, the hom-set matrix and its inverse are given by
\[
H =
\begin{pmatrix}
    1 & 0 & 0 \\
    1 & 2 & 0 \\
    1 & 6 & 6 \\
\end{pmatrix}, \quad \quad 
H^{-1}  =
\begin{pmatrix}
    1 & 0 & 0 \\
    -\frac{1}{2} & \frac{1}{2} &0  \\
    \frac{1}{3} & -\frac{1}{2} & \frac{1}{6} 
\end{pmatrix}.
\]
Applying~\cref{lem: idempotent}, the idempotents in the rational Goodwillie-Burnside ring $A_\bb{Q}(3)$ are given by

\begin{table}[ht]
    \centering
    \begin{tabular}{ccc}
      $e_3= \frac{1}{6}\cdot[3]$, \quad \quad & $e_2 = \frac{1}{2}\cdot[2] - \frac{1}{2}\cdot[3]$, \quad \quad & $e_1 = [1] -\frac{1}{2}\cdot[2] + \frac{1}{3}\cdot[3]$. \\
    \end{tabular}
\end{table}
\end{ex}

Our final low dimensional example highlights the passage between the Goodwillie-Burnside ring $A(d-1)$ and the Goodwillie Burnside ring $A(d)$ through the inclusion of $\sf{Epi}_{\leq d-1}$ as a full subcategory of $\sf{Epi}_{\leq d}$, which satisfies the conditions of~\cref{lem: kernel of map of burnside rings}.

\begin{ex}
The category $\sf{Epi}_{\leq 4}$ may be displayed pictorially as
\[\begin{tikzcd}
	& {[4]} \\
	{[1]} && {[2]} \\
	& {[3]}
	\arrow["24"{description}, from=1-2, to=1-2, loop, in=55, out=125, distance=10mm]
	\arrow[from=1-2, to=2-1]
	\arrow["14"{description}, from=1-2, to=2-3]
	\arrow[from=2-1, to=2-1, loop, in=55, out=125, distance=10mm]
	\arrow[from=2-3, to=2-1]
	\arrow["2"{description}, from=2-3, to=2-3, loop, in=55, out=125, distance=10mm]
	\arrow[from=3-2, to=2-1]
	\arrow["6"{description}, from=3-2, to=2-3]
	\arrow["6"{description}, from=3-2, to=3-2, loop, in=305, out=235, distance=10mm]
    \arrow["36"{description, pos=0.3}, from=1-2, to=3-2, crossing over]
\end{tikzcd}\]
In this case we have that the hom-set matrix and its inverse are given by
\[
H =
\begin{pmatrix}
    1 & 0 & 0 & 0 \\
    1 & 2 & 0 & 0  \\
    1 & 6 & 6 &0 \\
    1 & 14 & 36 &24 \\
\end{pmatrix}, \quad \quad 
H^{-1}  =
\begin{pmatrix}
    1 & 0 & 0 &0 \\
    -\frac{1}{2} & \frac{1}{2} &0 & 0 \\
    \frac{1}{3} & -\frac{1}{2} & \frac{1}{6} &0 \\
    -\frac{1}{4} & \frac{11}{24} & -\frac{1}{4} & \frac{1}{24}
\end{pmatrix}.
\]
The rational idempotents of $A_\bb{Q}(4)$ are given by

\begin{table}[!ht]
    \centering
    \begin{tabular}{ll}
     $e_4 = \frac{1}{24}\cdot[4]$, & $e_3 = \frac{1}{6}\cdot[3]-\frac{1}{4}\cdot[4]$, \\
     & \\
     $e_2 = \frac{1}{2}\cdot[2] - \frac{1}{2}\cdot[3] + \frac{11}{24}\cdot[4]$, \quad \quad  & $e_1 = [1] -\frac{1}{2}\cdot[2] + \frac{1}{3}\cdot[3] - \frac{1}{4}\cdot[4]$. \\
    \end{tabular}
\end{table}
The inclusion of the full subcategory $\sf{Epi}_{\leq 3} \hookrightarrow \sf{Epi}_{\leq 4}$ induces a ring map $A_\bb{Q}(4) \to A_\bb{Q}(3)$. This inclusion of the full subcategory satisfies the conditions of~\cref{lem: kernel of map of burnside rings}, so the kernel is generated by $[4] \in A_\bb{Q}(4)$. By our computation of the idempotents above, this is equivalent to the kernel being generated by the ``top idempotent'' $e_4$.
\end{ex}

We now summarise the previous examples in the general case of the category of finite sets of cardinality at most $d$ and epimorphisms. 

\begin{ex}\label{ex: general d}
The hom-set matrix for $\sf{Epi}_{\leq d}$ and its inverse are given by
\[
H = \begin{pmatrix}
1 & 0 & 0 & \cdots & 0 \\
1 & 2 & 0 & \cdots & 0  \\
1 & 6 & 3! & \cdots & 0\\
\vdots&  \vdots & \vdots & \ddots &  0\\
1& |\sf{surj}(d,2)| & |\sf{surj}(d,3)|  & \cdots & d!\\
\end{pmatrix}, \quad 
H^{-1} =  \begin{pmatrix}
1 & 0 & 0 & \cdots & 0 \\
-\frac{1}{2} & \frac{1}{2} & 0 & \cdots & 0  \\
\frac{1}{3} & -\frac{1}{2} & \frac{1}{3!} & \cdots & 0\\
\vdots&  \vdots & \vdots & \ddots &  0\\
\frac{1}{d!} s(d,1) & \frac{1}{d!} s(d,2) & \frac{1}{d!} s(d,3)  & \cdots & \frac{1}{d!}\\
\end{pmatrix}
\] 
where $|\sf{surj}(i,j)|$ is the cardinality of the set of surjections from a set with $i$ elements to a set with $j$ elements and $s(i,j)$ is the (signed) Stirling number of the first kind. Observe that the number of surjections is given by, $|\sf{surj}(i,j)| = j!S(i,j)$, where $S(i,j)$ is the Stirling number of the second kind. In the computation of $H^{-1}$ we make use of the fact that the inverse of $H$ exists since none of the diagonal elements $H_{k,k} = |\sf{surj}(k,k)|=k!$ are zero. Since $H$ is lower triangular, we have that $H^{-1}$ is also lower triangular, and that the diagonal elements of $H^{-1}$ are the reciprocal of the diagonal elements of $H$. The terms away from the leading diagonal may be computed iteratively, simply using the equation $H^{-1}H = I$. In particular, we see that the ``top idempotent'' corresponding to $[d] \in \sf{Epi}_{\leq d}$ is given by 
\[
e_d = \frac{1}{d!}\cdot[d]
\]
and \cref{lem: idempotent} along with our description of $H^{-1}$ gives the other idempotents.  
\end{ex}

\begin{ex}\label{ex:inclusion of subcategory epi}
    The inclusion of the full subcategory $i_{d-1}: \sf{Epi}_{\leq d-1} \hookrightarrow \sf{Epi}_{\leq d}$ induces a ring map $(i_{d-1})^\ast: A_\bb{Q}(d) \to A_\bb{Q}(d-1)$. This inclusion of the full subcategory satisfies the conditions of~\cref{lem: kernel of map of burnside rings}, so the kernel is generated by $[d] \in A_\bb{Q}(d)$. By our computation of the idempotents above, this is equivalent to the kernel being generated by the ``top idempotent'' $e_d$.
\end{ex}

\section{Excisive functors}
In the next sections, we will apply the abstract Burnside ring machinery to the study of excisive functors in Goodwillie calculus. 
We begin by recalling some necessary background.

An $n$-cube $\es{X}$ in an $\infty$-category $\es{C}$ is a functor $\es{X}: \es{P}(n) \to \es{C}$, where $\es{P}(n)$ is the poset of subsets of the set of $n$ elements. Assuming that $\es{C}$ is bicomplete, an $n$-cube is \emph{strongly cocartesian} if every face is a pushout, \emph{cartesian} if it is a limit diagram and \emph{cocartesian} if it is a colimit diagram. 

A functor $F: \s^\omega \to \s$ is \emph{$d$-excisive} if it sends strongly cocartesian $(d+1)$-cubes to cartesian cubes. One should view this as a weak form of excision, i.e., a $1$-excisive functor sends pushout diagrams to pullback diagrams.  We will say that $F$ is \emph{reduced} if $F(\ast) \simeq \ast$. Denote by $\exc{d}(\s^\omega,\s)$ the $\infty$-category of reduced $d$-excisive endofunctors of spectra\footnote{For technical reasons, we must restrict the domain to the full subcategory of compact objects but we shall not belabour this point. We will also only consider reduced functors, hence drop the reduced terminology and refer to the objects of $\exc{d}(\s^\omega,\s)$ as $d$-excisive functors. The reduced assumption is not limiting since any functor $F: \s \to \s$ is ``reduced up to a constant'', i.e., decomposes as $F(X) \simeq \overline{F}(X) \times F(\ast)$, where $\overline{F}$ is reduced.}. 

To any $F: \s^\omega \to \s$, Goodwillie calculus assigns a sequence of approximations 
\[\begin{tikzcd}
	&& F \\
	\cdots & {P_dF} & \cdots & {P_1F} & {P_0F = F(\ast)}
	\arrow[from=2-4, to=2-5]
	\arrow[from=2-2, to=2-3]
	\arrow[from=2-3, to=2-4]
	\arrow[bend right=30, from=1-3, to=2-2]
	\arrow[from=2-1, to=2-2]
	\arrow[bend left=30, from=1-3, to=2-4]
	\arrow[bend left=20, from=1-3, to=2-5]
\end{tikzcd}\]
in which $P_dF$ is the universal $d$-excisive approximation to $F$. 
A comprehensive account of the homotopy theory of excisive functors of spectra is given in~\cite[\S2]{AroneBarthelHeardSanders}. The key technical input about the $\infty$-category of $d$-excisive functors is that the category of $d$-excisive functors is closed under Day convolution of functors.

\begin{thm}[{\cite[Theorem 2.38]{AroneBarthelHeardSanders}}]
The $\infty$-category $\exc{d}(\s^\omega, \s)$ of $d$-excisive functors is a presentably symmetric monoidal stable $\infty$-category with monoidal structure given by Day convolution and monoidal unit $P_dh_\bb{S} = P_d\Sigma^\infty\Map(\bb{S},-) \simeq P_d\Sigma^\infty\Omega^\infty$.
\end{thm}

Since rationalisation is a smashing localization, we obtain the following corollary, which summarises the key technical information about the category of rational $d$-excisive functors.

\begin{cor}\label{cor: local excisive category}
The $\infty$-category $\exc{d}(\s^\omega,\s)_\bb{Q}$ of rational $d$-excisive functors is a presentably symmetric monoidal stable $\infty$-category with monoidal structure given by Day convolution and monoidal unit $L_\bb{Q}P_dh_\bb{S}$.
\end{cor}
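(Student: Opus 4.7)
The plan is to deduce the corollary directly from the preceding lemma of Arone--Barthel--Heard--Sanders by showing that rationalisation on $\exc{d}(\s,\s)$ is a smashing Bousfield localisation of presentably symmetric monoidal stable $\infty$-categories.

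First I would fix notation: write $L_\bb{Q}\colon \s\to\s$ for the rationalisation of spectra, which is smashing since it is given by the smashing idempotent $L_\bb{Q}\bb{S}\in \s$. Since $\exc{d}(\s,\s)$ is a full stable subcategory of $\Fun(\s^{\mathrm{fin}},\s)$ closed under the relevant limits and colimits, post-composition with $L_\bb{Q}$ lands in $\exc{d}(\s,\s)$ and defines an endofunctor $L_\bb{Q}\colon \exc{d}(\s,\s)\to\exc{d}(\s,\s)$. This gives a localisation of $\exc{d}(\s,\s)$ whose local objects are exactly the rational $d$-excisive functors, and whose $\infty$-category of local objects is $\exc{d}(\s,\s)_\bb{Q}$ by definition.

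Second, I would verify that this localisation is compatible with the Day convolution symmetric monoidal structure, i.e.\ that it is a smashing localisation in the sense of Lurie. The key point is that Day convolution on $\exc{d}(\s,\s)$ is computed pointwise in the target variable using the smash product in $\s$, so smashing with $L_\bb{Q}\bb{S}$ in the target variable is compatible with Day convolution. Concretely, if $F,G \in \exc{d}(\s,\s)$ and $F\star G$ denotes their Day convolution, then $L_\bb{Q}(F\star G) \simeq (L_\bb{Q}F)\star G \simeq (L_\bb{Q}F)\star (L_\bb{Q}G)$, and $L_\bb{Q}$ applied to the monoidal unit is the rational unit $L_\bb{Q} P_d h_\bb{S}$. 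Equivalently, the rational unit is a smashing idempotent in $\exc{d}(\s,\s)$, so by the standard theory of localisations of presentably symmetric monoidal stable $\infty$-categories (see for example Lurie, \emph{Higher Algebra}, Proposition 2.2.1.9 and Section 4.8.2), the $\infty$-category of modules over this idempotent, namely $\exc{d}(\s,\s)_\bb{Q}$, inherits a presentably symmetric monoidal stable structure whose monoidal product is the restriction of Day convolution and whose monoidal unit is $L_\bb{Q}P_d h_\bb{S}$.

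Finally, I would check presentability and stability separately if one prefers a direct argument: $\exc{d}(\s,\s)_\bb{Q}$ is an accessible localisation of the presentable $\infty$-category $\exc{d}(\s,\s)$ (the local objects are those $F$ for which $F(X)$ is rational for every input $X$, which is an accessible condition), hence presentable, and it is closed under finite limits and colimits in $\exc{d}(\s,\s)$, hence stable. The main subtlety, and the only real step that requires care, is the verification that post-composition with the smashing idempotent $L_\bb{Q}\colon \s\to\s$ interacts correctly with Day convolution in the functor variable; this is where one uses that the unit of $\exc{d}(\s,\s)$ is $P_d h_\bb{S}$ and that Day convolution is given, up to the polynomial approximation $P_d$, by smashing values in the target $\s$. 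With this in place, the unit identification $L_\bb{Q}P_d h_\bb{S}$ is immediate.
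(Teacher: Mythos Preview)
Your proposal is correct and follows the same route as the paper: the paper gives no explicit proof, merely the one-line justification ``Since rationalisation is a smashing localization, we obtain the following corollary,'' and your argument is precisely an expansion of that remark. One minor quibble: Day convolution is not literally ``computed pointwise in the target variable,'' but your later clarification that it is pointwise smashing \emph{up to} applying $P_d$, together with the fact that $L_\bb{Q}$ (being exact and cocontinuous) commutes with $P_d$, is the correct mechanism.
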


A functor $F$ is \emph{$d$-homogeneous} if it is $d$-excisive and $P_{d-1}F\simeq \ast$. We will denote the $\infty$-category of $d$-homogeneous functors by $\homog{d}(\s^\omega,\s)$. Since the $\infty$-category of $d$-excisive functors is closed under fibres, and a $(d-1)$-excisive functor is $d$-excisive, the $d$-th layer of the tower,
\[
D_dF = \fib(P_dF \longrightarrow P_{d-1}F),
\]
is $d$-homogeneous, and there is an adjunction
\[
\adjunction{\iota}{\homog{d}(\s^\omega,\s)}{\exc{d}(\s^\omega,\s)}{D_d},
\]
realising the $\infty$-category of $d$-homogeneous functors as a coreflexive sub-$\infty$-category of the $\infty$-category of $d$-excisive functors. It follows that a $d$-excisive functor $F$ has a Goodwillie tower of the form
\[\begin{tikzcd}
	F & {P_{d-1}F} & \cdots & {P_1F} \\
	{D_dF} & {D_{d-1}F} && {D_1F}.
	\arrow[from=1-1, to=1-2]
	\arrow[from=1-2, to=1-3]
	\arrow[from=1-3, to=1-4]
	\arrow[from=2-1, to=1-1]
	\arrow[from=2-2, to=1-2]
	\arrow["\simeq"', from=2-4, to=1-4]
\end{tikzcd}\]
 
\section{The rational Goodwillie-Burnside ring}\label{sec: goodwillie calculus}
Our attention now turns to the role of the abstract Burnside ring $A_\bb{Q}(d)$ in Goodwillie calculus, a comprehensive account of which is given in~\cite[\S8-9]{AroneBarthelHeardSanders}. We work rationally, as this allows us to simplify and streamline certain arguments, but integral versions of many of the statements in this section may be found in \emph{loc. cit.}.

The Goodwillie derivatives assemble into a left adjoint 
\[
\partial_\ast: \exc{d}(\s^\omega,\s) \longrightarrow \sf{SymSeq},
\]
between the $\infty$-category of $d$-excisive functors and the $\infty$-category of symmetric sequences~\cite[Proposition 4.5]{ACClassification}. This functor factors through $d$-truncated symmetric sequences since $\partial_iP_d \simeq 0$ for $i>d$. The mapping spectra in symmetric sequences decompose as a product, so there is an induced map on mapping spectra
\[
\partial_\ast : \Hom_{\exc{d}(\s^\omega,\s)}(P_dh_\bb{S}, P_dh_\bb{S}) \longrightarrow \prod_{1 \leq i \leq d} \Hom_{\s}(\partial_i(P_dh_\bb{S}), \partial_i(P_dh_\bb{S})),
\]
which induces a ring map 
\[
\theta_d : \pi_0\sf{End}_{\exc{d}(\s^\omega,\s)}(P_dh_\bb{S}) \otimes \bb{Q} \longrightarrow \prod_{1 \leq i \leq d} \pi_0\Hom_{\s}(\partial_i(P_dh_\bb{S}), \partial_i(P_dh_\bb{S})) \otimes \bb{Q} \cong \bb{Q}^d
\]
on the level of rational homotopy categories, where the last isomorphism follows from Arone and Ching's computation~\cite[Lemma 5.10]{ACClassification} of the derivatives of representables as $\partial_i (P_dh_\bb{S}) \simeq \bb{S}$ with trivial $\Sigma_i$-action for all $1 \leq i \leq d$.

To ease notation, we will denote the endomorphism ring $\pi_0\Hom_{\s}(P_dh_\bb{S},P_dh_\bb{S}) \otimes \bb{Q}$ by $R_\bb{Q}(d)$. We will now show that Goodwillie-Burnside ring $A_\bb{Q}(d)$ is isomorphic to this endomorphism ring $R_\bb{Q}(d)$, by an intricate analysis of how the calculus and the $\bb{Q}$-linear algebra interact. 

\begin{lem}\label{lem: endomorphism vs yoshida}
There is an isomorphism
\[
R_\bb{Q}(d) \cong A_{\bb{Q}}(d)
\]
between the endomorphism ring of the monoidal unit in $\exc{d}(\s^\omega,\s)_\bb{Q}$ and the rational Goodwillie-Burnside ring.
\end{lem}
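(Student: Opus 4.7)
The plan is to show that $R_\bb{Q}(d) \cong A_\bb{Q}(d)$ by factoring the Burnside homomorphism $\phi$ of~\cref{cor: splitting of rational ABR} through $R_\bb{Q}(d)$ and then pinning down $\dim_\bb{Q} R_\bb{Q}(d)$ via the Goodwillie tower. The first step is to construct a ring map $\Psi: A_\bb{Q}(d) \to R_\bb{Q}(d)$ with $\theta_d \circ \Psi = \phi$. The assignment sends each generator $[k] \in \sf{Epi}_{\leq d}$ of $A_\bb{Q}(d)$ to an endomorphism of the monoidal unit $P_d h_\bb{S}$ built from the symmetric monoidal structure of~\cref{cor: local excisive category}; this is, integrally, essentially the content of~\cite[\S8--9]{AroneBarthelHeardSanders}, which we rationalise. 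Commutativity of the triangle is then a direct calculation: the $i$-th derivative of the natural transformation attached to $[k]$ is $|\sf{Epi}_{\leq d}([i], [k])|$ copies of the identity, matching the defining formula for $\phi$.

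The second step is to establish the upper bound $\dim_\bb{Q} R_\bb{Q}(d) \leq d$ by induction on $d$. Applying $\Hom_{\exc{d}(\s,\s)}(P_d h_\bb{S}, -)$ rationally to the fibre sequence $D_d h_\bb{S} \to P_d h_\bb{S} \to P_{d-1} h_\bb{S}$, and using both the Yoneda-type identification $\Hom_{\exc{d}(\s,\s)}(P_d h_\bb{S}, F) \simeq F(\bb{S})$ for $d$-excisive $F$ together with the adjunction between $P_{d-1}$ and the inclusion $\exc{d-1}(\s,\s) \hookrightarrow \exc{d}(\s,\s)$, yields an exact sequence
\[
\pi_0(D_d h_\bb{S}(\bb{S}))_\bb{Q} \longrightarrow R_\bb{Q}(d) \longrightarrow R_\bb{Q}(d-1).
\]
The leftmost term is $\pi_0(\bb{S}_{h\Sigma_d})_\bb{Q} \cong \bb{Q}$ since $\Sigma_d$ is a finite group, and the inductive hypothesis gives $\dim R_\bb{Q}(d-1) \leq d-1$, with base case $R_\bb{Q}(1) = \pi_0(\bb{S})_\bb{Q} \cong \bb{Q}$. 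Therefore $\dim_\bb{Q} R_\bb{Q}(d) \leq d$.

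To conclude, note that because $\phi$ is an isomorphism and $\theta_d \circ \Psi = \phi$, the map $\Psi$ is injective, so $\dim_\bb{Q} R_\bb{Q}(d) \geq \dim_\bb{Q} A_\bb{Q}(d) = d$. Combined with the inductive upper bound, equality holds and $\Psi$ is an isomorphism.

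The main obstacle is the first step: constructing $\Psi$ naturally and checking it is a ring homomorphism compatible with $\phi$. One route is Glasman's equivalence between $\exc{d}(\s,\s)$ and spectral Mackey functors on $\sf{Epi}_{\leq d}$, under which the endomorphism ring of the unit Mackey functor canonically inherits Yoshida's abstract Burnside structure; the alternative is to unpack the explicit integral identification of~\cite[\S8--9]{AroneBarthelHeardSanders}. The dimension bound, while conceptually clean, demands some care with the cokernel of the boundary map from $\pi_{-1}$ terms, but these are controlled rationally.
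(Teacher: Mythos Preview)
Your approach is essentially the paper's, reorganised. Both use the elements $[\lambda_m] \in R_\bb{Q}(d)$ from \cite[Lemma~9.20]{AroneBarthelHeardSanders} (your $\Psi([m])$), whose images under $\theta_d$ are the columns of the hom-set matrix, together with induction on $d$ via the fibre sequence $D_d h_\bb{S} \to P_d h_\bb{S} \to P_{d-1} h_\bb{S}$ evaluated at $\bb{S}$ and the computation $\pi_0(D_d h_\bb{S}(\bb{S}))_\bb{Q} \cong \pi_0(\Sigma^\infty_+ B\Sigma_d)_\bb{Q} \cong \bb{Q}$. The paper shows $\theta_d$ is surjective (linear independence of the $\theta_d[\lambda_m]$) and then injective (comparing the one-dimensional kernels over $R_\bb{Q}(d-1)$ and $\bb{Q}^{d-1}$); you instead deduce $\dim_\bb{Q} R_\bb{Q}(d) \geq d$ from injectivity of $\Psi$ and $\dim_\bb{Q} R_\bb{Q}(d) \leq d$ from the exact sequence. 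Same ingredients, different bookkeeping.

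One simplification worth noting: you do not need $\Psi$ to be a \emph{ring} map, so what you flag as the ``main obstacle'' evaporates. A $\bb{Q}$-linear $\Psi$ with $\theta_d \circ \Psi = \phi$ suffices, and this is precisely what the paper extracts from \cite{AroneBarthelHeardSanders} without further verification. Once the dimension count forces $\Psi$ to be a linear isomorphism, $\theta_d$ is a bijective ring map and hence a ring isomorphism, giving $R_\bb{Q}(d) \cong \bb{Q}^d \cong A_\bb{Q}(d)$ as rings. Your caution about $\pi_{-1}$-terms is likewise unnecessary: the upper bound only requires exactness at the middle of $\pi_0(D_d h_\bb{S}(\bb{S}))_\bb{Q} \to R_\bb{Q}(d) \to R_\bb{Q}(d-1)$, which is immediate from the long exact sequence regardless of what happens further along.
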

\begin{proof}
There is a zigzag of ring maps
\[
R_\bb{Q}(d) \xrightarrow{ \ \theta_d \ } \bb{Q}^d \xleftarrow{\ \phi \ } A_\bb{Q}(d),
\]
in which the right-most map is an isomorphism by~\cref{cor: splitting of rational ABR}, hence it suffices to show that $\theta_d: R_\bb{Q}(d) \to \bb{Q}^d$ is an isomorphism. 

By an elementary degree computation~\cite[Lemma 9.20]{AroneBarthelHeardSanders}, there are 
elements $[\lambda_m] \in R_\bb{Q}(d)$, $1 \leq m \leq d$, such that the 
$i$th component of $\theta_d [\lambda_m]$
is $|\sf{surj}(i,m)|$, see~\cref{ex: general d}. The same example shows that the vectors $\theta_d[\lambda_1], \dots, \theta_d[\lambda_d]$ are linearly independent, hence $\theta_d$ is surjective for all $d$.

To see that $\theta_d$ is injective, and hence an isomorphism, we work by induction over $d$. In the initial case of $d=1$,  we have equivalences
\[
\Hom_{\exc{1}(\s^\omega,\s)}(P_1h_\bb{S}, P_1h_\bb{S}) 
\xrightarrow{\partial_1}  
\Hom_{\s}(\partial_1(P_1h_\bb{S}), \partial_1(P_1h_\bb{S})),
\simeq 
\Hom_{\s}(\bb{S}, \bb{S}),
\]
by the classification of homogeneous functors, since $D_1h_\bb{S} \simeq P_1h_\bb{S}$. Taking $\pi_0$ gives the required ring isomorphism $\theta_1 : R_\bb{Q}(1) \to \bb{Q}$.

For the induction step, assume that $\theta_{d-1}$ is an isomorphism. Since $\partial_iP_{d-1} \simeq \partial_i$ for $1 \leq i \leq d-1$ and $\partial_dP_{d-1}\simeq 0$, there is a commutative diagram 
\[\begin{tikzcd}
	{R_\bb{Q}(d)} & {\bb{Q}^d} \\
	{R_\bb{Q}(d-1)} & {\bb{Q}^{d-1}}
	\arrow["{\theta_d}", from=1-1, to=1-2]
	\arrow["{\pi_0(P_{d-1}) \otimes \bb{Q}}"', from=1-1, to=2-1]
	\arrow["{(i_{d-1})^\ast}", from=1-2, to=2-2]
	\arrow["{\theta_{d-1}}", from=2-1, to=2-2]
\end{tikzcd}\]
where $(i_{d-1})^\ast : \bb{Q}^d \to \bb{Q}^{d-1}$ is projection onto the first $(d-1)$ factors. 
The above square induces a map
\[
\bar{\theta}_d: \ker(\pi_0(P_{d-1})\otimes \bb{Q}) \longrightarrow \ker((i_{d-1})^\ast),
\]
on vertical kernels. If $\bar{\theta}_d$ is an isomorphism, a diagram chase will show that $\theta_d : R_\bb{Q}(d) \to \bb{Q}^d$ is
injective and hence an isomorphism as desired. 

The element $[\lambda_d]$ is in the kernel of $\pi_0(P_{d-1}) \otimes \bb{Q}$, hence $(\theta_{d-1} \circ \pi_0(P_{d-1}) \otimes \bb{Q})[\lambda_d] =0$, and so, by commutativity of the above square, $\theta_d[\lambda_d]$ is in the kernel of $(i_{d-1})^\ast$. Since $\theta_d[\lambda_d]= (0, \dots, 0, d!)$ the map $\bar{\theta}_d$ is non-zero and 
it is therefore surjective as the projection onto the first $(d-1)$ factors, $(i_{d-1})^\ast : \bb{Q}^d \to \bb{Q}^{d-1}$ has $\ker((i_{d-1})^\ast) \cong \bb{Q}$. 

To show that the map on kernels is an isomorphism, we will show that both kernels have the same dimension. By the Yoneda lemma and the universal property of the excisive approximations the exact sequence
\[
\ker(\pi_0(P_{d-1})\otimes \bb{Q}) \longrightarrow R_\bb{Q}(d) \longrightarrow R_\bb{Q}(d-1),
\]
may be identified with the exact sequence
\[
\pi_0(D_dh_\bb{S}(\bb{S}))\otimes \bb{Q} \longrightarrow \pi_0(P_dh_\bb{S}(\bb{S}))\otimes \bb{Q} \longrightarrow \pi_0(P_{d-1}h_\bb{S}(\bb{S}))\otimes \bb{Q},
\]
induced by the $d$-homogeneous fibre sequence of $h_\bb{S}$ evaluated at $\bb{S}$. Computing $d$-homogenous layer of $h_\bb{S}$, we see that
\[
D_dh_\bb{S}(\bb{S}) \simeq (\partial_dh_\bb{S} \otimes \bb{S}^{\otimes d})_{h\Sigma_d} \simeq (\bb{S} \otimes \bb{S}^{\otimes d})_{h\Sigma_d} \simeq \bb{S}^{\otimes d}_{h\Sigma_d} \simeq \Sigma^\infty_+B\Sigma_d.
\]
It follows that $\pi_0(D_dh_\bb{S}(\bb{S})) \otimes \bb{Q} \cong \pi_0(\Sigma^\infty_+B\Sigma_d) \otimes \bb{Q} \cong \bb{Q}$, and  
hence there is an isomorphism $\ker(\pi_0(P_{d-1})\otimes \bb{Q}) \cong \bb{Q}$. It follows that the domain and codomain of $\bar{\theta}_d$ are isomorphic to $\bb{Q}$, and hence $\bar{\theta}_d$ is an isomorphism. 
\end{proof}

The identification of the Goodwillie-Burnside ring with the endomorphism ring is compatible with the Goodwillie tower and the rational idempotents in the following sense.

\begin{lem}\label{lem: Goodwillie burnside as endo ring}
There is a commutative diagram
\[\begin{tikzcd}
	{R_\bb{Q}(d)} & {\bb{Q}^d} & {A_\bb{Q}(d)} \\
	{R_\bb{Q}(d-1)} & {\bb{Q}^{d-1}} & {A_\bb{Q}(d-1)}
	\arrow["{\theta_d}", from=1-1, to=1-2]
	\arrow["{\pi_0(P_{d-1}) \otimes \bb{Q}}"', from=1-1, to=2-1]
	\arrow["{(i_{d-1})^\ast}"', from=1-2, to=2-2]
	\arrow["{\phi_d}"', from=1-3, to=1-2]
	\arrow["{(i_{d-1})^\ast}", from=1-3, to=2-3]
	\arrow["{\theta_{d-1}}"', from=2-1, to=2-2]
	\arrow["{\phi_{d-1}}", from=2-3, to=2-2]
\end{tikzcd}\]
in which each horizontal map is an isomorphism and the right-hand map is discussed in \cref{ex:inclusion of subcategory epi}. In particular, the kernel of $\pi_0(P_{d-1}) \otimes \bb{Q}$ is generated by the top idempotent in the rational Goodwillie-Burnside ring.
\end{lem}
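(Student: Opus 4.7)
The plan is to assemble three pieces: (i) the horizontal isomorphisms, which are already available from earlier results; (ii) commutativity of the two squares; and (iii) an identification of the kernel of the left vertical map with the kernel of the right one, which has already been computed combinatorially. The bulk of the work has effectively been done in the previous sections, so the proof is largely an assembly argument.

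For the horizontal maps, $\theta_d$ and $\theta_{d-1}$ are isomorphisms by \cref{lem: endomorphism vs yoshida}, while $\phi_d$ and $\phi_{d-1}$ are isomorphisms by \cref{cor: splitting of rational ABR}. For the right square, the ring map $(i_{d-1})^\ast : A_\bb{Q}(d) \to A_\bb{Q}(d-1)$ is \emph{defined} (in \cref{sec: ABR}) as the unique map making the Burnside-homomorphism square commute, so commutativity here is immediate. For the left square, I will observe that it is exactly the commutative diagram that appeared in the induction step of the proof of \cref{lem: endomorphism vs yoshida}: it reflects the compatibility of $\theta_d$ with $P_{d-1}$ coming from the equivalences $\partial_i P_{d-1}F \simeq \partial_i F$ for $1 \leq i \leq d-1$ and $\partial_d P_{d-1} \simeq 0$, together with the fact that on the $\bb{Q}^d$-side, projection onto the first $d-1$ factors is precisely what $(i_{d-1})^\ast$ does (since the $i$-th coordinate of $\theta_d$ corresponds to the object $[i] \in \sf{Epi}_{\leq d}$ and the inclusion $i_{d-1}$ is the identity on these objects for $i \leq d-1$).

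With the diagram commuting and all horizontal arrows being isomorphisms, the composite $\phi_d^{-1} \circ \theta_d : R_\bb{Q}(d) \xrightarrow{\cong} A_\bb{Q}(d)$ intertwines the two vertical maps, so it identifies $\ker(\pi_0(P_{d-1}) \otimes \bb{Q})$ with $\ker((i_{d-1})^\ast : A_\bb{Q}(d) \to A_\bb{Q}(d-1))$. By \cref{ex:inclusion of subcategory epi}, the latter kernel is the free $\bb{Q}$-module generated by $[d]$. By \cref{ex: general d}, the top idempotent is $e_d = \tfrac{1}{d!}\cdot [d]$, which differs from $[d]$ by a nonzero scalar, so $e_d$ and $[d]$ generate the same ($1$-dimensional) subspace, and thus the same ideal. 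Hence $\ker(\pi_0(P_{d-1})\otimes\bb{Q})$ is generated by the top idempotent, as claimed.

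The only step with any real content is the commutativity of the left square, and even this is not a new computation but a bookkeeping observation about what was already produced inside the proof of \cref{lem: endomorphism vs yoshida}; I do not anticipate a genuine obstacle.
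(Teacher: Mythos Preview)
Your proposal is correct and follows essentially the same approach as the paper's own proof: both observe that the left square is the commutative square already produced in the proof of \cref{lem: endomorphism vs yoshida}, that the right square commutes by the very definition of $(i_{d-1})^\ast$ on abstract Burnside rings, and that the horizontal isomorphisms then transport the combinatorial identification of $\ker((i_{d-1})^\ast)$ from \cref{ex:inclusion of subcategory epi} (together with $e_d = \tfrac{1}{d!}\cdot[d]$ from \cref{ex: general d}) to the kernel of $\pi_0(P_{d-1})\otimes\bb{Q}$.
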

\begin{proof}
We have already seen that the left square commutes, and the right square commutes, by definition of the map on the level of abstract Burnside rings. The claim about the rational kernel follows from combining~\cref{lem: kernel of map of burnside rings} with~\cref{lem: endomorphism vs yoshida} to see that the induced maps on the level of vertical kernels in the above diagram are all isomorphisms and the observation of~\cref{ex: general d} that the idempotent $e_d$ is a rational multiple of $[d]$. 
\end{proof}

\section{Splitting rational excisive functors}

The Goodwillie-Burnside ring $A_\bb{Q}(\sf{Epi}_{\leq d})$ associated to the category $\sf{Epi}_{\leq d}$ is equivalent to the endomorphism ring of the unit (\cref{lem: endomorphism vs yoshida}), so the splitting of the Goodwillie-Burnside ring (\cref{cor: splitting of rational ABR}) will induce the required splitting of the category of rational $d$-excisive functors (\cref{thm: splitting of exc}).

The idempotents in the Goodwillie-Burnside ring lift to idempotents in the $\infty$-category of $d$-excisive functors by applying~\cite[Lemma 1.2.4.6, Warning 1.2.4.8]{HA} and observing that the idempotents in the Goodwillie-Burnside ring are also idempotents in the homotopy category of $d$-excisive functors (\cref{lem: Goodwillie burnside as endo ring}).

Given an idempotent $e$ in a stable $\infty$-category $\es{C}$, the $\infty$-categorical enhancement of work of the first author~\cite[Theorem 4.4]{Barnes} provides a splitting of $\es{C}$ into orthogonal pieces
\[
\es{C} \simeq L_{e}\es{C} \times L_{1-e} \es{C},
\]
by localizing with respect to the action of the idempotent $e$ and the orthogonal idempotent $1-e$. Since the $\infty$-category of $d$-excisive functors is equivalent to modules over the monoidal unit~\cite[Proposition 2.35]{AroneBarthelHeardSanders}, the orthogonal idempotent splitting corresponding to $e_d$ may be equivalently phrased as
\[
\exc{d}(\s^\omega,\s)_\bb{Q} \simeq \sf{Mod}_\bb{Q}(P_dh_\bb{S}) \simeq \sf{Mod}_\bb{Q}(P_dh_\bb{S}[e_d^{-1}]) \times \sf{Mod}_\bb{Q}(P_dh_\bb{S}[(1-e_d)^{-1}]),
\]
see e.g.,~\cite{Mathew, May}, where $\sf{Mod}_\bb{Q}(-)$ is shorthand notation for the $\infty$-category of modules in the rational functor category $\Fun_\ast(\s^\omega,\s)_\bb{Q}$.

Consider the fibre sequence
\begin{equation}\label{eqn: fib seq w/ Pd}
D_d(P_dh_\bb{S}) \longrightarrow P_dh_\bb{S} \longrightarrow P_{d-1}(P_dh_\bb{S})
\end{equation}
defining the $d$-homogeneous layer of the monoidal unit of the $\infty$-category of $d$-excisive functors. Given a self map $e: P_dh_\bb{S} \to P_dh_\bb{S}$, we obtain a commutative diagram
\[\begin{tikzcd}
	{D_d(P_dh_\bb{S})} & {P_dh_\bb{S}} & { P_{d-1}(P_dh_\bb{S})} \\
	{D_d(P_dh_\bb{S})} & {P_dh_\bb{S}} & { P_{d-1}(P_dh_\bb{S})}
	\arrow[from=1-1, to=1-2]
	\arrow["{D_d(e)}"', from=1-1, to=2-1]
	\arrow[from=1-2, to=1-3]
	\arrow["e", from=1-2, to=2-2]
	\arrow["{P_{d-1}(e)}", from=1-3, to=2-3]
	\arrow[from=2-1, to=2-2]
	\arrow[from=2-2, to=2-3]
\end{tikzcd}\]
since (leaving inclusions implicit) the left horizontal maps are the counit of the adjunction between $d$-homogeneous functors and $d$-excisive functors, while the right horizontal maps are the unit of the adjunction between $d$-excisive functors and $(d-1)$-excisive functors. Hence, any idempotent $e$ of $P_dh_\bb{S}$ acts by $D_d(e)$ on the $d$-homogeneous layer and $P_{d-1}(e)$ on the $(d-1)$-excisive approximation. The fibre sequence~\eqref{eqn: fib seq w/ Pd} may be identified with the fibre sequence
\begin{equation} \label{eqn: fib seq w/out Pd}
D_dh_\bb{S} \longrightarrow P_dh_\bb{S} \longrightarrow P_{d-1}h_\bb{S}, 
\end{equation}
since $P_{d-1}P_d\simeq P_{d-1}$ and $D_dP_d\simeq D_d$. Through the equivalence between~\eqref{eqn: fib seq w/ Pd} and~\eqref{eqn: fib seq w/out Pd} an idempotent $e$ of $P_{d}h_\bb{S}$ acts by $D_d(e)$ on the $d$-homogeneous layer and $P_{d-1}(e)$ on the $(d-1)$-excisive approximation in~\eqref{eqn: fib seq w/out Pd}.

We now examine how the idempotent $e_d \in A_\bb{Q}(d)$ acts on the fibre sequence~\eqref{eqn: fib seq w/out Pd} with the goal of showing that this fibre sequence induces the desired splitting. To ease notation, we will use $F[e^{-1}]$ for the inversion of the action of the idempotent $e$ on $F$, where $F$ is any of the three functors from~\eqref{eqn: fib seq w/out Pd}.

\begin{lem}\label{lem: ed on Pd-1}
The idempotent $e_d \in A_\bb{Q}(d)$ acts on $P_{d-1}h_\bb{S}$ by zero, i.e., $P_{d-1}h_\bb{S}[e_d^{-1}]\simeq 0$. In particular, $P_{d-1}h_\bb{S}$ is $e_d$-acyclic and hence $(1-e_d)$-local.
\end{lem}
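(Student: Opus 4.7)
The plan is to leverage \cref{lem: Goodwillie burnside as endo ring}, which tells us that $e_d$ generates the kernel of the ring map $\pi_0(P_{d-1}) \otimes \bb{Q} : R_\bb{Q}(d) \to R_\bb{Q}(d-1)$ induced by the $(d-1)$-excisive approximation. Concretely, the discussion preceding the lemma identifies the action of an idempotent $e$ of $P_dh_\bb{S}$ on $P_{d-1}h_\bb{S}$ (viewed through the equivalence $P_{d-1}P_dh_\bb{S} \simeq P_{d-1}h_\bb{S}$) with the self-map $P_{d-1}(e)$ of $P_{d-1}h_\bb{S}$, which represents the image of $[e] \in R_\bb{Q}(d)$ under $\pi_0(P_{d-1}) \otimes \bb{Q}$.

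First, I would observe that by~\cref{lem: Goodwillie burnside as endo ring}, $e_d$ lies in the kernel of $\pi_0(P_{d-1}) \otimes \bb{Q}$, and hence the induced self-map $P_{d-1}(e_d) : P_{d-1}h_\bb{S} \to P_{d-1}h_\bb{S}$ is null in the homotopy category. Since $P_{d-1}(e_d)$ is precisely how $e_d$ acts on $P_{d-1}h_\bb{S}$ via the equivalence of fibre sequences~\eqref{eqn: fib seq w/ Pd} and~\eqref{eqn: fib seq w/out Pd}, multiplication by $e_d$ is zero on $P_{d-1}h_\bb{S}$.

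Next, inverting a self-map which is nullhomotopic yields the zero object, so $P_{d-1}h_\bb{S}[e_d^{-1}] \simeq 0$, giving the $e_d$-acyclicity. For the final claim, I would use the orthogonal decomposition $1 = e_d + (1 - e_d)$ in $R_\bb{Q}(d)$: since $e_d$ acts as zero on $P_{d-1}h_\bb{S}$, the complementary idempotent $1 - e_d$ must act as the identity, so localising away from $1-e_d$ is trivial and $P_{d-1}h_\bb{S}$ is $(1-e_d)$-local.

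There is no real obstacle here, as all the substantive input has already been established; the only care needed is bookkeeping, namely ensuring that the identification of ``action of $e_d$ on $P_{d-1}h_\bb{S}$'' with ``image of $e_d$ under $\pi_0(P_{d-1})\otimes\bb{Q}$'' is made explicit, using the equivalence between the two fibre sequences noted in the paragraph preceding this lemma.
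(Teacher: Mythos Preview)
Your proposal is correct and follows essentially the same route as the paper: both identify the action of $e_d$ on $P_{d-1}h_\bb{S}$ with the image of $e_d$ under $\pi_0(P_{d-1})\otimes\bb{Q}$ and then invoke \cref{lem: Goodwillie burnside as endo ring} to see that this image is zero. The paper spells out the adjunction identifying $\Hom_{\exc{d}}(P_{d-1}h_\bb{S},P_{d-1}h_\bb{S})$ with $\Hom_{\exc{d-1}}(P_{d-1}h_\bb{S},P_{d-1}h_\bb{S})$ before passing to $\pi_0$, while you cite the kernel statement directly; conversely, you make the ``in particular'' clause about $(1-e_d)$-locality explicit via $1 = e_d + (1-e_d)$, which the paper leaves to the reader.
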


\begin{proof}
The action of the idempotent $e_d$ on $P_{d-1}h_\bb{S}$ is given by the image of $e_d$ under the map
\[
P_{d-1}: \Hom_{\exc{d}(\s^\omega,\s)}(P_dh_\bb{S}, P_dh_\bb{S}) \longrightarrow \Hom_{\exc{d}(\s^\omega,\s)}(P_{d-1}h_\bb{S}, P_{d-1}h_\bb{S}).
\]
By adjunction there is an equivalence
\[
\Hom_{\exc{d}(\s^\omega,\s)}(P_{d-1}h_\bb{S}, P_{d-1}h_\bb{S}) \simeq \Hom_{\exc{d-1}(\s^\omega,\s)}(P_{d-1}h_\bb{S}, P_{d-1}h_\bb{S}),
\]
and applying $\pi_0$ to the composite gives the map
\[
R_\bb{Q}(d) \xrightarrow{ \ \pi_0  (P_{d-1})\otimes \bb{Q} \ }  R_\bb{Q}(d-1).
\]
By~\cref{lem: Goodwillie burnside as endo ring} this is isomorphic to
\[
A_\bb{Q}(d) \xrightarrow{ \ {(i_{d-1})^\ast} \ } A_\bb{Q}(d-1),
\]
which sends $e_d$ to zero. The result follows from the observation that the action of an idempotent is determined by its action on the homotopy category,~\cite[Lemma 1.2.4.6, Warning 1.2.4.8]{HA}.
\end{proof}

\begin{lem}\label{lem: ed on Dd}
The idempotent $e_d \in A_\bb{Q}(d)$ acts on $D_dh_\bb{S}$ by the identity, i.e., $D_dh_\bb{S}[e_d^{-1}] \simeq D_dh_\bb{S}$. In particular, $D_dh_\bb{S}$ is $e_d$-local and hence $(1-e_d)$-acyclic.
\end{lem}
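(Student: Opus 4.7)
The plan is to parallel the preceding lemma but use Goodwillie's classification of $d$-homogeneous functors to reduce the claim to an elementary computation on the $d$-th derivative. The naturality diagram preceding~\cref{lem: ed on Pd-1} shows that the action of $e_d$ on $D_dh_\bb{S}$ is given by $D_d(e_d)$. Since the derivative functor $\partial_d: \homog{d}(\s,\s) \to \s^{B\Sigma_d}$ is an equivalence of $\infty$-categories, showing $D_d(e_d) \simeq \mathrm{id}$ reduces to showing that its image under $\partial_d$ is the identity on $\partial_d(D_dh_\bb{S}) \simeq \bb{S}$ (with trivial $\Sigma_d$-action).

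To carry out this computation, I would examine the composite ring map
\[
A_\bb{Q}(d) \cong R_\bb{Q}(d) \longrightarrow \pi_0\mathrm{End}_{\s^{B\Sigma_d}}(\bb{S}) \otimes \bb{Q} \cong \bb{Q}
\]
induced by $\partial_d \circ D_d \simeq \partial_d$. Because $\partial_d(P_{d-1}h_\bb{S}) \simeq 0$ implies $\partial_d(D_dh_\bb{S}) \simeq \partial_d(P_dh_\bb{S}) \simeq \bb{S}$, this composite is exactly projection onto the $d$-th factor of the isomorphism $\theta_d: R_\bb{Q}(d) \to \bb{Q}^d$ from~\cref{lem: endomorphism vs yoshida}. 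Under the companion ring isomorphism $\phi: A_\bb{Q}(d) \to \bb{Q}^d$, the idempotent $e_d = \tfrac{1}{d!}[d]$ of~\cref{ex: general d} maps to $(0,\dots,0,1)$, since $\phi([d])_i = |\sf{surj}([i],[d])|$ vanishes for $i < d$ and equals $d!$ when $i = d$. Consequently $\partial_d(e_d)$ acts on $\bb{S}$ by multiplication by $1$, i.e., as the identity in the rational homotopy category.

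To upgrade this to the $\infty$-categorical statement, I would invoke the same idempotent-lifting principle of \cite[Lemma~1.2.4.6]{HA} that produced the action of $e_d$ in the first place: the action of an idempotent is determined up to homotopy by its class in the homotopy category. Combining this with Goodwillie's equivalence gives $D_d(e_d) \simeq \mathrm{id}_{D_dh_\bb{S}}$, so $D_dh_\bb{S}[e_d^{-1}] \simeq D_dh_\bb{S}$ and $1-e_d$ acts as zero on $D_dh_\bb{S}$, which is therefore $(1-e_d)$-acyclic and $e_d$-local. The only real subtlety is the identification of the $d$-th coordinate of $\theta_d$ with the $\partial_d$-action; this is implicit in the proof of \cref{lem: endomorphism vs yoshida}, and once acknowledged the result reduces to the single linear-algebra identity $\phi(e_d) = (0,\dots,0,1)$.
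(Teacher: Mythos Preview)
Your proposal is correct and follows essentially the same route as the paper: both reduce to showing $D_d(e_d)$ is the identity by passing through the classification of homogeneous functors to $\partial_d$, and then identify the induced map $A_\bb{Q}(d)\to\bb{Q}$ with the $d$-th coordinate of $\theta_d$ (equivalently $\phi$), on which $e_d$ evaluates to $1$. Your version is slightly more explicit about the computation $\phi(e_d)=(0,\dots,0,1)$ via $|\sf{surj}([i],[d])|$, but the argument is the same.
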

\begin{proof}
It suffices to show that the image of the idempotent $e_d$ under the map
\[
\Hom_{\exc{d}(\s^\omega,\s)}(P_dh_\bb{S}, P_dh_\bb{S}) \xrightarrow{ \ D_d \ } \Hom_{\exc{d}(\s^\omega,\s)}(D_dh_\bb{S}, D_dh_\bb{S}),
\]
is the identity map. 
By adjunction and the classification of homogeneous functors, there are equivalences
\[
\Hom_{\exc{d}(\s^\omega,\s)}(D_dh_\bb{S}, D_dh_\bb{S}) \simeq \Hom_{\homog{d}(\s^\omega,\s)}(D_dh_\bb{S}, D_dh_\bb{S}) \simeq \Hom_{\s}(\partial_dh_\bb{S}, \partial_dh_\bb{S}),
\]
such that the diagram
\[\begin{tikzcd}
	{\Hom_{\exc{d}(\s^\omega,\s)}(P_dh_\bb{S}, P_dh_\bb{S}) } & {\Hom_{\exc{d}(\s^\omega,\s)}(D_dh_\bb{S}, D_dh_\bb{S})} \\
	{\Hom_{\s}(\partial_dh_\bb{S}, \partial_dh_\bb{S})} & {\Hom_{\s}(\partial_dh_\bb{S}, \partial_dh_\bb{S})}
	\arrow["{D_d}", from=1-1, to=1-2]
	\arrow["{\partial_d}"', from=1-1, to=2-1]
	\arrow["\simeq", from=1-2, to=2-2]
	\arrow[Rightarrow, no head, from=2-1, to=2-2]
\end{tikzcd}\]
commutes. On the level of homotopy categories, this diagram becomes 
a commutative square 
\[\begin{tikzcd}
	{A_\bb{Q}(d)} & {\pi_0\sf{End}(D_dh_\bb{S})\otimes \bb{Q}} \\
	{\bb{Q}} & {\bb{Q}}
	\arrow["{\pi_0(D_d)}", from=1-1, to=1-2]
	\arrow["{\pi_0(\partial_d)}"', from=1-1, to=2-1]
	\arrow["\simeq", from=1-2, to=2-2]
	\arrow[Rightarrow, no head, from=2-1, to=2-2]
\end{tikzcd}\]
of rings. As $\pi_0(\partial_d)(e_d)$ is $1$ in $\bb{Q}$, it follows that 
$\pi_0(D_d)(e_d)$ is the identity map of $D_dh_\bb{S}$.
\end{proof}

Combining the previous lemmas, we obtain a complete understanding of the idempotent actions on the fibre sequence defining the $d$-homogeneous layer of the unit.

\begin{prop}\label{prop: homog fibre splits}
The fibre sequence,
\[
D_dh_\bb{S} \longrightarrow P_dh_\bb{S} \longrightarrow P_{d-1}h_\bb{S},
\]
splits rationally. Moreover,
\[
P_{d}h_\bb{S}[e_d^{-1}]  \simeq D_dh_\bb{S}
\qquad
P_dh_\bb{S}[(1-e_d)^{-1}] \simeq P_{d-1}h_\bb{S}.
\]
\end{prop}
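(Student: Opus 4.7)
The plan is to deploy the idempotent splitting of $\exc{d}(\s,\s)_\bb{Q}$ induced by the orthogonal pair $e_d, 1-e_d \in A_\bb{Q}(d)$, and then identify the resulting summands of the unit using the two preceding lemmas. Applied to $P_dh_\bb{S}$, the splitting gives a product decomposition
\[
P_dh_\bb{S} \simeq P_dh_\bb{S}[e_d^{-1}] \times P_dh_\bb{S}[(1-e_d)^{-1}]
\]
in $\exc{d}(\s,\s)_\bb{Q}$. The task thereby reduces to matching these two summands with $D_dh_\bb{S}$ and $P_{d-1}h_\bb{S}$.

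First I would apply the $e_d$-localization functor to the fibre sequence $D_dh_\bb{S} \to P_dh_\bb{S} \to P_{d-1}h_\bb{S}$. Since both $e_d$-localization and $(1-e_d)$-localization are exact in a stable $\infty$-category, they send fibre sequences to fibre sequences. By \cref{lem: ed on Pd-1} the right-hand term becomes zero, and by \cref{lem: ed on Dd} the left-hand term is already $e_d$-local, so this localized fibre sequence degenerates to an equivalence $D_dh_\bb{S} \simeq P_dh_\bb{S}[e_d^{-1}]$. Dually, applying $(1-e_d)$-localization to the original fibre sequence kills the left-hand term (by \cref{lem: ed on Dd}) and leaves the right-hand term unchanged (by \cref{lem: ed on Pd-1}), giving $P_dh_\bb{S}[(1-e_d)^{-1}] \simeq P_{d-1}h_\bb{S}$.

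Combining these identifications with the product decomposition above yields $P_dh_\bb{S} \simeq D_dh_\bb{S} \times P_{d-1}h_\bb{S}$. To conclude that the original fibre sequence splits, I would check that the canonical maps of the fibre sequence agree (up to equivalence) with the projection and inclusion of this product: the map $D_dh_\bb{S} \to P_dh_\bb{S}$ is the counit of the coreflexive inclusion of $d$-homogeneous functors into $d$-excisive functors and coincides with the $e_d$-localization map upon identifying $D_dh_\bb{S}$ with $P_dh_\bb{S}[e_d^{-1}]$, while the map $P_dh_\bb{S} \to P_{d-1}h_\bb{S}$ is the $(1-e_d)$-localization map. Both identifications are natural and arise from the universal properties exploited in \cref{lem: ed on Dd,lem: ed on Pd-1}.

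The main obstacle I anticipate is not a conceptual one but the bookkeeping required to make the identifications of maps in the last paragraph precise at the $\infty$-categorical level, rather than merely on the level of the homotopy category. This should be handled by combining the compatibility of idempotent localization with fibre sequences with the fact that the counit $D_dh_\bb{S}\to P_dh_\bb{S}$ and the unit $P_dh_\bb{S}\to P_{d-1}h_\bb{S}$ are both uniquely characterised up to contractible choice by their universal properties, so the equivalences produced by the localization procedure must factor through them.
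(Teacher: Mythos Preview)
Your proposal is correct and follows essentially the same route as the paper: split $P_dh_\bb{S}$ via the orthogonal idempotents, localize the fibre sequence, and use \cref{lem: ed on Pd-1,lem: ed on Dd} to identify the two summands. The paper is slightly more terse and does not dwell on verifying that the maps of the fibre sequence coincide with the product projections; your additional paragraph on this point is a reasonable precaution but not something the paper treats as an obstacle.
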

\begin{proof}
We may split $P_{d}h_\bb{S} $ using the idempotents
\[
P_{d}h_\bb{S} \simeq P_dh_\bb{S}[e_d^{-1}] \times P_dh_\bb{S}[(1-e_d)^{-1}].
\]
Consider the first term of this splitting and the induced fibre sequence 
\[
D_dh_\bb{S}[e_d^{-1}] \longrightarrow 
P_dh_\bb{S}[e_d^{-1}] \longrightarrow 
P_{d-1}h_\bb{S}[e_d^{-1}], 
\]
By~\cref{lem: ed on Pd-1} we know the last term is zero, so the
first map is an equivalence. By~\cref{lem: ed on Dd} we know that
$D_dh_\bb{S}[e_d^{-1}] \simeq D_dh_\bb{S}$. 

A similar argument for the other term of the splitting shows that
\[
P_dh_\bb{S}[(1-e_d)^{-1}] \simeq 
P_{d-1}h_\bb{S}[(1-e_d)^{-1}] \simeq
P_{d-1}h_\bb{S}. \qedhere
\]
\end{proof}

We can now apply our knowledge of how the idempotent acts on the homogeneous fibre sequence~\eqref{eqn: fib seq w/out Pd} of the monoidal unit to identify the pieces of the splitting on the level of $\infty$-categories.

\begin{thm}\label{thm: splitting of exc}
There is an equivalence
\[
\exc{d}(\s^\omega,\s)_\bb{Q} \simeq \prod_{1 \leq i \leq d} \homog{i}(\s^\omega,\s)_\bb{Q},
\]
of $\infty$-categories.
\end{thm}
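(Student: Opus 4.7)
The plan is to argue by induction on $d$. The base case $d = 1$ is immediate: a reduced $1$-excisive functor $F$ satisfies $P_0 F \simeq \ast$ by reducedness, hence $D_1 F \simeq P_1 F \simeq F$, giving the equality $\exc{1}(\s,\s)_\bb{Q} = \homog{1}(\s,\s)_\bb{Q}$.

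For the inductive step, I would first use \cref{lem: endomorphism vs yoshida} together with \cite[Lemma 1.2.4.6]{HA} to lift the idempotent $e_d \in A_\bb{Q}(d)$ to an idempotent in the $\infty$-category $\exc{d}(\s,\s)_\bb{Q}$, and then apply the $\infty$-categorical enhancement of \cite[Theorem 4.4]{Barnes} to obtain an orthogonal decomposition
\[
\exc{d}(\s,\s)_\bb{Q} \simeq L_{e_d}\exc{d}(\s,\s)_\bb{Q} \times L_{1-e_d}\exc{d}(\s,\s)_\bb{Q}.
\]
Combining the module description from \cref{cor: local excisive category} with the splitting of the monoidal unit in \cref{prop: homog fibre splits}, the two local pieces are equivalent to the $\infty$-categories of modules over $D_d h_\bb{S}$ and $P_{d-1} h_\bb{S}$, respectively. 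Since $P_{d-1} h_\bb{S}$ is the monoidal unit of $\exc{d-1}(\s,\s)_\bb{Q}$, the second factor is equivalent to $\exc{d-1}(\s,\s)_\bb{Q}$, which by the inductive hypothesis decomposes as $\prod_{1 \leq i \leq d-1} \homog{i}(\s,\s)_\bb{Q}$.

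The remaining step, which I expect to be the main obstacle, is to identify the $e_d$-local piece with $\homog{d}(\s,\s)_\bb{Q}$. For this I would show directly that a $d$-excisive functor $F$ is $e_d$-local if and only if $P_{d-1}F \simeq \ast$, that is, if and only if $F$ is $d$-homogeneous. Concretely, the $(1-e_d)$-action on $F$ is compatible with the universal map $F \to P_{d-1}F$, and by \cref{lem: ed on Pd-1} acts as the identity on $P_{d-1} h_\bb{S}$, so $e_d$-locality of $F$ forces $P_{d-1} F \simeq \ast$; the converse follows symmetrically by invoking \cref{lem: ed on Dd}. Assembling these identifications with the inductive hypothesis then yields the desired product decomposition. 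The delicate point is verifying the compatibility of the idempotent action with the homogeneous fibre sequence at the level of arbitrary modules rather than just on the unit, but this should be a formal consequence of naturality of the splitting established in \cref{prop: homog fibre splits}.
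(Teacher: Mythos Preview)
Your proposal is correct and follows essentially the same approach as the paper: induct on $d$, split $\exc{d}(\s,\s)_\bb{Q}$ via the idempotent $e_d$ using the module description and \cref{prop: homog fibre splits}, and identify the $(1-e_d)$-local piece with $\exc{d-1}(\s,\s)_\bb{Q}$. The only minor difference is in how the $e_d$-local piece is identified with $\homog{d}(\s,\s)_\bb{Q}$: you propose to show directly that $e_d$-locality of $F$ is equivalent to $P_{d-1}F\simeq\ast$, whereas the paper observes that the projection onto the second factor \emph{is} the functor $P_{d-1}$ and then reads off the first factor as its fibre, which is $\homog{d}(\s,\s)_\bb{Q}$ by definition---this sidesteps the ``delicate point'' you flag, since once the projection is identified no further compatibility check is needed.
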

\begin{proof}
By~\cref{prop: homog fibre splits} the idempotent splitting of the $\infty$-category of $d$-excisive functors is given by
\begin{align*}
\exc{d}(\s^\omega,\s)_\bb{Q} 
&\simeq \sf{Mod}_\bb{Q}(P_dh_\bb{S}[e_d^{-1}]) \times \sf{Mod}_\bb{Q}(P_dh_\bb{S}[(1-e_d)^{-1}]) \\
&\simeq \sf{Mod}_\bb{Q}(D_dh_\bb{S}) \times \sf{Mod}_\bb{Q}(P_{d-1}h_\bb{S}) \\
&\simeq \sf{Mod}_\bb{Q}(D_dh_\bb{S}) \times \exc{d-1}(\s^\omega,\s)_\bb{Q},
\end{align*}
since $D_dh_\bb{S}$ inherits the structure of a commutative algebra object in $\Fun_\ast(\s^\omega,\s)_\bb{Q}$ through the equivalence $D_dh_\bb{S} \simeq P_dh_\bb{S}[e_d^{-1}]$. 

Through the equivalent descriptions of the splitting of $\exc{d}(\s^\omega,\s)_\bb{Q}$ the projection onto the second factor corresponds to the functor 
\[
P_{d-1}: \exc{d}(\s^\omega,\s)_\bb{Q} \longrightarrow \exc{d-1}(\s^\omega,\s)_\bb{Q},
\]
and we may identify the fibre of this projection with the fibre of the functor $P_{d-1}$, which is the $\infty$-category of $d$-homogeneous functors. It follows that the $\infty$-category of $d$-excisive functors splits as 
\[
\exc{d}(\s^\omega,\s)_\bb{Q}  \simeq \homog{d}(\s^\omega,\s)_\bb{Q} \times \exc{d-1}(\s^\omega,\s)_\bb{Q}.
\]
The splitting as a product of $\infty$-categories of homogenous functors follows by reverse induction and the observation that there is an equivalence $\homog{1}(\s^\omega,\s)_\bb{Q} \simeq \exc{1}(\s^\omega,\s)_\bb{Q}$ since all functors are assumed reduced. 
\end{proof}

The proof of~\cref{thm: splitting of exc} has the following immediate corollary, which amounts to the observation that rational homogeneous functor $D_dh_\bb{S}$ admits the structure of a commutative algebra object in $\Fun_\ast(\s^\omega,\s)_\bb{Q}$. 

\begin{cor}\label{homogeneous modules}
There is an equivalence 
\[
\homog{d}(\s^\omega,\s)_\bb{Q} \simeq \sf{Mod}_{\Fun_\ast(\s^\omega, \s)_\bb{Q}}(D_dh_\bb{S}),
\]
of $\infty$-categories.
\end{cor}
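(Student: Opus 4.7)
The plan is to extract the corollary directly from the machinery assembled in the proof of \cref{thm: splitting of exc}. That proof already establishes a product decomposition
\[
\exc{d}(\s,\s)_\bb{Q} \simeq \sf{Mod}_\bb{Q}(P_dh_\bb{S}[e_d^{-1}]) \times \sf{Mod}_\bb{Q}(P_dh_\bb{S}[(1-e_d)^{-1}]),
\]
so my task is simply to identify the $e_d$-local factor with $\homog{d}(\s,\s)_\bb{Q}$.

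First, I would explain why $D_dh_\bb{S}$ actually acquires the structure of a commutative algebra object in the symmetric monoidal $\infty$-category $\Fun(\s,\s)_\bb{Q}$: by \cref{prop: homog fibre splits} there is an equivalence $D_dh_\bb{S} \simeq P_dh_\bb{S}[e_d^{-1}]$, and inverting an idempotent endomorphism of the monoidal unit produces a commutative algebra (this is the standard idempotent-to-smashing-localization yoga, as in \cite[Proposition 2.35]{AroneBarthelHeardSanders} and \cite{Mathew, May}, already invoked in the proof of \cref{thm: splitting of exc}). Hence $\sf{Mod}_\bb{Q}(D_dh_\bb{S})$ makes sense as a presentably symmetric monoidal stable $\infty$-category.

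Second, I would identify the projection onto the $(1-e_d)$-local factor with the universal $(d-1)$-excisive approximation $P_{d-1}\colon \exc{d}(\s,\s)_\bb{Q} \to \exc{d-1}(\s,\s)_\bb{Q}$; this is precisely the identification used inside the proof of \cref{thm: splitting of exc}, justified by $P_dh_\bb{S}[(1-e_d)^{-1}] \simeq P_{d-1}h_\bb{S}$ from \cref{prop: homog fibre splits} combined with the module-category description of $\exc{d-1}(\s,\s)_\bb{Q}$.

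Finally, I would take fibres of the two descriptions of the projection: on the product side the fibre of a projection onto the second factor is the first factor, namely $\sf{Mod}_\bb{Q}(D_dh_\bb{S})$; on the calculus side, by definition, the fibre of $P_{d-1}\colon \exc{d}(\s,\s)_\bb{Q} \to \exc{d-1}(\s,\s)_\bb{Q}$ is the full sub-$\infty$-category $\homog{d}(\s,\s)_\bb{Q}$ of $d$-homogeneous functors. Comparing gives the desired equivalence. I do not expect a genuine obstacle: the only subtle point is ensuring that the commutative algebra structure used to form $\sf{Mod}_\bb{Q}(D_dh_\bb{S})$ is the one coming from the idempotent localization (so that the equivalence is one of symmetric monoidal $\infty$-categories and not merely of underlying $\infty$-categories), but this is immediate from the smashing nature of the localization.
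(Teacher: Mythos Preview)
Your proposal is correct and follows essentially the same route as the paper: the corollary is stated as an immediate consequence of the proof of \cref{thm: splitting of exc}, where the idempotent splitting gives $\exc{d}(\s,\s)_\bb{Q} \simeq \sf{Mod}_\bb{Q}(D_dh_\bb{S}) \times \exc{d-1}(\s,\s)_\bb{Q}$ (with $D_dh_\bb{S}$ obtaining its commutative algebra structure from $P_dh_\bb{S}[e_d^{-1}]$), the projection onto the second factor is identified with $P_{d-1}$, and the fibre is identified with $\homog{d}(\s,\s)_\bb{Q}$. Your closing remark about the symmetric monoidal structure is a harmless elaboration beyond what the paper records.
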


\begin{rem}
There is \emph{a priori} no reason for this result to hold integrally: $D_d$ is not a symmetric monoidal endofunctor of $\Fun_\ast(\s^\omega, \s)$ since $D_d$ does not preserve the symmetric monoidal unit of $\Fun_\ast(\s^\omega, \s)$.
The functor $D_dh_\bb{S}$ is an idempotent coalgebra object in $\Fun_\ast(\s^\omega,\s)$ since the colocalization $D_d: \exc{d}(\s^\omega,\s) \to \homog{d}(\s^\omega,\s)$ is smashing, i.e., given by Day convolution with $D_dh_\bb{S}$. In the rational setting, the identification of $D_dh_\bb{S}$ with the localization of $P_dh_\bb{S}$ at the idempotent $e_d$ induces an idempotent algebra structure on $D_dh_\bb{S}$ which is compatible with the idempotent coalgebra structure. This algebra structure cannot lift to an integral algebra structure as if it did, the splitting of~\cref{thm: splitting of exc} would also hold integrally.
\end{rem}

\section{An algebraic model for rational excisive functors}
One of the major theorems of Goodwillie calculus is the classification of homogeneous functors. To every functor, $F: \s \to \s$ Goodwillie assigns a symmetric sequence $\partial_\ast F =\{\partial_d F\}_{d \in \bb{N}}$ of spectra, in which we call the $d$-th term $\partial_d F$, the \emph{$d$-th derivative of $F$.} When the input functor is $d$-homogeneous, Goodwillie shows that it is completely determined by its $d$-th derivative. By~\cite[Theorem 3.5 and \S5]{GoodCalcIII}, there is an equivalence
\[
\homog{d}(\s^\omega,\s) \simeq \s^{B\Sigma_d}
\]
between the $\infty$-category of $d$-homogeneous endofunctors of spectra and the $\infty$-category of spectra with an action of $\Sigma_d$, the symmetric group on $d$ letters. Applying this classification of homogeneous functors to the splitting of~\cref{thm: splitting of exc} we obtain the following.

\begin{cor}
There is an equivalence
\[
\exc{d}(\s^\omega,\s)_\bb{Q} \simeq \prod_{1 \leq i \leq d} \s_\bb{Q}^{B\Sigma_i}
\]
of $\infty$-categories.
\end{cor}

Shipley's~\cite{Shipley} equivalence between $HR$-module spectra and chain complexes of $R$-modules together with the identification of $H\bb{Q}$-module spectra with rational spectra induce the algebraic model of~\cref{cor: main cor} upon the observation that there is an induced equivalence
\[
\s_\bb{Q}^{B\Sigma_i} = \Fun(B\Sigma_i, \s_\bb{Q}) \simeq \Fun(B\Sigma_i, \sf{Ch}_\bb{Q}) \simeq \sf{Ch}(\bb{Q}[\Sigma_i]). 
\]

\begin{cor}
There is an equivalence
\[
\exc{d}(\s^\omega,\s)_\bb{Q} \simeq \prod_{1 \leq i \leq d} \sf{Ch}(\bb{Q}[\Sigma_i])
\]
of $\infty$-categories. 
\end{cor}

We call the right-hand side an \emph{algebraic model} of the $\infty$-category of rational $d$-excisive functors from spectra to spectra.

\bibliography{references}

\newcommand{\etalchar}[1]{$^{#1}$}
\begin{thebibliography}{BHI{\etalchar{+}}22b}

\bibitem[ABHS25]{AroneBarthelHeardSanders}
G.~Arone, T.~Barthel, D.~Heard, and B.~Sanders.
\newblock The spectrum of excisive functors.
\newblock {\em Invent. Math.}, 241(2):363--464, 2025.

\bibitem[AC15]{ACClassification}
G.~Arone and M.~Ching.
\newblock A classification of {Taylor} towers of functors of spaces and spectra.
\newblock {\em Adv. Math.}, 272:471--552, 2015.

\bibitem[Bar09]{Barnes}
D.~Barnes.
\newblock Splitting monoidal stable model categories.
\newblock {\em J. Pure Appl. Algebra}, 213(5):846--856, 2009.

\bibitem[Bar17]{barwick17}
C.~Barwick.
\newblock Spectral {Mackey} functors and equivariant algebraic {{\(K\)}}-theory. {I}.
\newblock {\em Adv. Math.}, 304:646--727, 2017.

\bibitem[BGK21]{BGK_elliptic}
D.~Barnes, J.~P.~C. Greenlees, and M.~K{\k{e}}dziorek.
\newblock An algebraic model for rational na{\"{\i}}ve-commutative ring {{\(SO(2)\)}}-spectra and equivariant elliptic cohomology.
\newblock {\em Math. Z.}, 297(3-4):1205--1235, 2021.

\bibitem[BGKS17]{BGKS}
D.~Barnes, J.~P.~C. Greenlees, M.~K{\k{e}}dziorek, and B.~Shipley.
\newblock Rational {{\(\mathrm{SO}(2)\)}}-equivariant spectra.
\newblock {\em Algebr. Geom. Topol.}, 17(2):983--1020, 2017.

\bibitem[BHI{\etalchar{+}}22a]{WIT3.2}
A.~M. Bohmann, C.~Hazel, J.~Ishak, M.~K{\k{e}}dziorek, and C.~May.
\newblock Genuine-commutative structure on rational equivariant {{\(K\)}}-theory for finite abelian groups.
\newblock {\em Bull. Lond. Math. Soc.}, 54(3):1082--1103, 2022.

\bibitem[BHI{\etalchar{+}}22b]{WIT3.1}
A.~M. Bohmann, C.~Hazel, J.~Ishak, M.~K{\k{e}}dziorek, and C.~May.
\newblock Naive-commutative ring structure on rational equivariant {{\(K\)}}-theory for abelian groups.
\newblock {\em Topology Appl.}, 316:18, 2022.
\newblock Id/No 108100.

\bibitem[BHK24]{BHK24}
D.~Barnes, M.~A. Hill, and M.~K{\c{e}}dziorek.
\newblock Splitting rational incomplete {M}ackey functors.
\newblock {\em arXiv:2410.10962}, 2024.

\bibitem[BK22]{BarnesKedziorek}
D.~Barnes and M.~K{\c{e}}dziorek.
\newblock An introduction to algebraic models for rational {{\(G\)}}-spectra.
\newblock In {\em Equivariant topology and derived algebra. Based on the conference, Trondheim, Norway, 2019. In honour of Professor J. P. C. Greenlees' 60th birthday.}, pages 119--179. Cambridge: Cambridge University Press, 2022.

\bibitem[Gla16]{Glasman}
S.~Glasman.
\newblock Goodwillie calculus and {M}ackey functors.
\newblock {\em arXiv:1610.03127}, 2016.

\bibitem[Glu81]{Gluck}
D.~Gluck.
\newblock Idempotent formula for the {Burnside} algebra with applications to the {{\(p\)}}-subgroup simplicial complex.
\newblock {\em Ill. J. Math.}, 25:63--67, 1981.

\bibitem[GM24]{GM24}
B.~Guillou and J.~P. May.
\newblock Models of {{\(G\)}}-spectra as presheaves of spectra.
\newblock {\em Algebr. Geom. Topol.}, 24(3):1225--1275, 2024.

\bibitem[Goo90]{GoodcalcI}
T.~G. Goodwillie.
\newblock Calculus. {I}. {T}he first derivative of pseudoisotopy theory.
\newblock {\em $K$-Theory}, 4(1):1--27, 1990.

\bibitem[Goo92]{GoodcalcII}
T.~G. Goodwillie.
\newblock Calculus. {II}. {A}nalytic functors.
\newblock {\em $K$-Theory}, 5(4):295--332, 1992.

\bibitem[Goo03]{GoodCalcIII}
T.~G. Goodwillie.
\newblock Calculus. {III}. {T}aylor series.
\newblock {\em Geom. Topol.}, 7:645--711, 2003.

\bibitem[{Gre}06]{greenleesreport}
J.~P.~C. {Greenlees}.
\newblock Triangulated categories of rational equivairant cohomology theories.
\newblock {\em Oberwolfach Reports}, pages 480--488, 2006.

\bibitem[GS18]{GreenleesShipleyTorus}
J.~P.~C. Greenlees and B.~Shipley.
\newblock An algebraic model for rational torus-equivariant spectra.
\newblock {\em J. Topol.}, 11(3):666--719, 2018.

\bibitem[Kuh04]{Kuhn}
N.~J. Kuhn.
\newblock Tate cohomology and periodic localization of polynomial functors.
\newblock {\em Invent. Math.}, 157(2):345--370, 2004.

\bibitem[Lur17]{HA}
J.~Lurie.
\newblock Higher algebra.
\newblock \url{http://www.math.harvard.edu/~lurie/papers/HA.pdf}, 2017.

\bibitem[Mat16]{Mathew}
A.~Mathew.
\newblock The {Galois} group of a stable homotopy theory.
\newblock {\em Adv. Math.}, 291:403--541, 2016.

\bibitem[May01]{May}
J.~P. May.
\newblock Idempotents and {Landweber} exactness in brave new algebra.
\newblock {\em Homology Homotopy Appl.}, 3(2):355--359, 2001.

\bibitem[McC01]{McCarthy}
R.~McCarthy.
\newblock Dual calculus for functors to spectra.
\newblock In {\em Homotopy methods in algebraic topology. Proceedings of an AMS-IMS-SIAM joint summer research conference, University of Colorado, Boulder, CO, USA, June 20--24, 1999}, pages 183--215. Providence, RI: American Mathematical Society (AMS), 2001.

\bibitem[Shi07]{Shipley}
B.~Shipley.
\newblock {{\(H\mathbb Z\)}}-algebra spectra are differential graded algebras.
\newblock {\em Am. J. Math.}, 129(2):351--379, 2007.

\bibitem[tD79]{tomDieck}
T.~tom Dieck.
\newblock {\em Transformation groups and representation theory}, volume 766 of {\em Lect. Notes Math.}
\newblock Springer, Cham, 1979.

\bibitem[Wim19]{Wimmer}
C.~Wimmer.
\newblock A model for genuine equivariant commutative ring spectra away from the group order.
\newblock {\em arXiv:1905.12420}, 2019.

\bibitem[Yos87]{YoshidaBurnside}
T.~Yoshida.
\newblock On the {B}urnside rings of finite groups and finite categories.
\newblock In {\em Commutative algebra and combinatorics ({K}yoto, 1985)}, volume~11 of {\em Adv. Stud. Pure Math.}, pages 337--353. North-Holland, Amsterdam, 1987.

\bibitem[YOT18]{YOT}
T.~Yoshida, F.~Oda, and Y.~Takegahara.
\newblock Axiomatic theory of {Burnside} rings. {I}.
\newblock {\em J. Algebra}, 505:339--382, 2018.

\end{thebibliography}
\bibliographystyle{alpha}
\end{document}